%% file: friends.tex
\documentclass{amsart} 

\usepackage{amsmath,amssymb,amsthm} 
\usepackage{graphicx} 
\usepackage[colorinlistoftodos]{todonotes}
\usepackage{subfig}
\usepackage[arrow, matrix, curve]{xy} 
\usepackage{xcolor} 
\usepackage{makecell} 
\usepackage{multirow} 
\usepackage{enumerate}
\usepackage{booktabs} 
\usepackage{siunitx} 
\usepackage{hyperref} 
\usepackage{algorithm}
\usepackage{longtable}
\usepackage{cleveref}

\newcommand{\Z}{\mathbb{Z}}

\newtheoremstyle{thm}{}{}{\itshape}{}{\bfseries}{}{ }{} 
\newtheoremstyle{definition}{}{}{}{}{\bfseries}{}{ }{} 

\theoremstyle{thm}
\newtheorem{Theorem}{Theorem}[section]
\newtheorem{thm}[Theorem]{Theorem}

\newtheorem{prop}[Theorem]{Proposition}
\newtheorem{cor}[Theorem]{Corollary}
\newtheorem*{Theorem-ohne}{Theorem}
\newtheorem{con}[Theorem]{Conjecture}

\newtheorem{ques}[Theorem]{Question}

\newtheorem*{alg:friend_search}{Algorithm \ref{alg:friend_search}}
\newtheorem*{thm:census_friends}{Theorem \ref{thm:census_friends}}
\newtheorem*{prop:algo2}{Algorithm \ref{prop:algo2}}
\newtheorem*{conj:WHD}{Conjecture \ref{conj:WHD}}

\theoremstyle{definition}
\newtheorem{defi}[Theorem]{Definition}
\newtheorem{rem}[Theorem]{Remark}
\newtheorem{ex}[Theorem]{Example}

\crefname{con}{Conjecture}{Conjectures}
\crefname{ques}{Question}{Questions}
\crefname{cor}{Corollary}{Corollaries}
\crefname{prob}{Problem}{Problems}
\crefname{obs}{Observation}{Observations}
\crefname{alg}{Algorithm}{Algorithms}
\crefname{strat}{Strategy}{Strategies}
\crefname{lem}{Lemma}{Lemmas}
\crefname{prop}{Proposition}{Propositions}

\begingroup\expandafter\expandafter\expandafter\endgroup
\expandafter\ifx\csname pdfsuppresswarningpagegroup\endcsname\relax
\else
\fi

\numberwithin{equation}{section}

\title{The search for exotic knot traces}
\author{Marc Kegel}
\address{Universidad de Sevilla, Dpto.\ de Álgebra,
Avda.\ Reina Mercedes s/n,
41012 Sevilla}
\email{mkegel@us.es, kegelmarc87@gmail.com}
\author{Jonathan Spreer}
\address{School of Mathematics and Statistics, The University of Sydney, Carslaw Building F07, Camperdown NSW 2006, Australia}
\email{jonathan.spreer@sydney.edu.au}
\date{\today}

\begin{document}

\begin{abstract}
Two distinct knots are said to be {\em friends} if their complements, filled along the $0$-slope, produce diffeomorphic $3$-manifolds. In this article, we develop a practical algorithm, implemented using {\em SnapPy} and {\em Regina}, to search for a friend of a given knot. As an application, we construct a census of simple knots that admit friends and use these data to formulate conjectures about knot friends.
\end{abstract}

\keywords{Knot friends, Dehn surgery, characterising slopes} 

\makeatletter
\@namedef{subjclassname@2020}{%
  \textup{2020} Mathematics Subject Classification}
\makeatother

\subjclass[2020]{57R65; 57K10, 57K32} 


\maketitle

\section{Introduction}

We say that two knots in the $3$-sphere $S^3$ are \textit{friends} if they have diffeomorphic $0$-surgeries. 
In the literature, there exist many examples of pairs of knots that are friends, see for example~\cite{Osoinach_annulus,Miller_Piccirillo_traces,Piccirillo_shake_genus,Manolescu_Piccirillo_0_surgery}. On the other hand, many simple knots (like the unknot, the trefoil, or the figure eight) are known to have no friends~\cite{Gabai,Ozvath_Szabo_trefoil_figeight,Baldwin_Sivek_0_inf}. 
The study of friends is motivated by the observation that a pair of friends with different smooth sliceness status\footnote{Recall that a knot is \textit{smoothly slice} if it bounds a smoothly embedded disk in the $4$-ball.} yields a counterexample to the smooth $4$-dimensional Poincaré conjecture~\cite[Problem 1.19]{Kirby1997Problems}, cf.~\cite{Freedman_Gompf_Man_Machine,Manolescu_Piccirillo_0_surgery}.

 Apart from the possibility of using friends to disprove the smooth $4$-dimensional Poincaré conjecture, this article is motivated by the following questions.

\begin{ques} \label{ques:main}
  Given a knot $K$, does $K$ have a friend?
\end{ques}

\Cref{ques:main} should be understood to describe general topological, algebraic, or geometric conditions on $K$ that ensure the existence or non-existence of a friend.

\begin{ques}\label{ques:decidable}
  Is there an algorithm that, given a knot $K$, returns \texttt{true} if $K$ has a friend, and \texttt{false} otherwise? 
\end{ques}

To make \Cref{ques:decidable} more precise we can assume that the input knot $K$ is given as a knot diagram, or as a triangulation of the exterior of $K$. In the former case, we often abuse notation and refer to $K$ as both the knot and its representation as an input diagram.

\subsection{Algorithmic search for friends}

The main contribution of this article is code and the results of running this code. Namely, we have the following result.

\begin{alg:friend_search}[Friend search]
There exists a practical and implemented algorithm that takes as input a diagram of a knot $K$, or a triangulation of the exterior of $K$, and outputs a (possibly empty) list of friends of $K$.
\end{alg:friend_search}

Note that the solution of the homeomorphism problem for closed $3$-manifolds or $3$-manifolds with toroidal boundary~\cite{Kuperberg} together with the Gordon--Luecke theorem~\cite{Gordon_Luecke} implies the existence of an algorithm that takes as input a knot $K$ and outputs in finite time any given friend of $K$ -- if it exists: Enumerate all knots $K'$ by increasing crossing number, create their $0$-surgeries $K'(0)$, and check if $K(0)$ and $K'(0)$ are diffeomorphic. This approach was taken in~\cite{Kegel_Weiss} for knots of low complexity. However, since the enumeration of knots has super-exponential running time in the crossing number, this is impractical in general. Nonetheless, on a theoretical level, \Cref{ques:decidable} reduces to certifying that a given knot has no friend.

\Cref{alg:friend_search}, on the other hand, takes a knot diagram of $K$ and first constructs a $1$-vertex triangulation of its $0$-surgery. 
It then removes an embedded closed curve $c$ from $K(0)$, and checks if $K(0)\setminus c$ is the complement of a knot in $S^3$ using the $6$-theorem~\cite{Ag00,La00}. This procedure is then iterated over all curves $c$ in $K(0)$ up to ambient isotopy (i.e.\ over all knots in $K(0)$) using a sensibly chosen order.

For producing a $1$-vertex triangulation of $K(0)$ from the input diagram, we use readily available functionality of {\em SnapPy}~\cite{SnapPy}. For enumerating a list of curves $c$ to remove from $K(0)$ we use the fact that every edge in a $1$-vertex triangulation describes an embedded closed curve in the underlying manifold: We first produce a list of $1$-vertex triangulations of $K(0)$ by exhaustively modifying the initial triangulation using local moves (e.g.\ bi-stellar flips \cite{Pachner,Rubinstein2019}). Then every edge of every modified triangulation is added to the collection of curves. Both, exhaustive enumeration of ``neighbor triangulations'', as well as drilling out edges from a triangulation is readily available functionality of {\em Regina}~\cite{Regina}. We describe this algorithm in \Cref{sec:friend_search}.

By standard arguments (carried out rigorously in a more specialised setting in \cite{knotted_edge}), we can expect every ambient isotopy class of a closed curve in $K(0)$ to be realised as an edge in some triangulation of $K(0)$. It follows that this approach will, eventually, find every given friend $K'$ of $K$. Worst case running times of this procedure quickly become excessive -- even if $K'$ is observed by an edge in a triangulation of $K(0)$ that is only a small number of local moves away from the initial triangulation of $K(0)$. In practice, however, the approach is efficient in many cases, as we discuss in \Cref{sec:friend_search} and \ref{sec:census} below.

A similar algorithm for hyperbolic $0$-surgeries is described in~\cite{Dunfield_exterior_to_link} and further extended and used in~\cite{Dunfield_Gong}. There, the curves $c$ are enumerated by considering paths in the $1$-skeleton of a dual cell-decomposition, or by drilling out short geodesics. 

\subsection{A census of friends}

We apply the friend search -- \Cref{alg:friend_search} -- to standard datasets of knots: The \textit{low crossing number knots} (i.e.\ prime knots that admit diagrams with at most $12$ crossings) and the \textit{SnapPy census knots} (i.e.\ the hyperbolic knots whose complements can be ideally triangulated by at most $9$ ideal tetrahedra). Low-crossing number knots are described using the Dowker–Thistlethwaite (DT) notation~\cite{DT_notation}, as in the Hoste–Thistlethwaite–Weeks table~\cite{HTW_table}, cf.~\cite{KnotInfo}. Census knots are described using the notation of Dunfield~\cite{Dunfield_census} as implemented in {\em SnapPy}~\cite{SnapPy}. For example, the figure eight knot is $K4a1$ in the DT notation and $K2\_1$ in the census knot notation. For more background on the {\em SnapPy} census knots, we refer to~\cite{Dunfield_L_space,ABG+19,Baker_Kegel_braid,BKM_alt,BKM_QA,census10} and references therein.

\begin{thm:census_friends}\hfill
\begin{enumerate}
    \item Among the $2977$ low crossing number knots, at least $1867$ have a friend.
    \item Among the $1267$ {\em SnapPy} census knots, at least $ 183$ have a friend.
\end{enumerate}
\end{thm:census_friends}

The low crossing number knots that admit a friend are displayed in \Cref{tab:low_crossing}. PD codes of potentially minimal diagrams of the friends can be accessed at~\cite{data}. Their average crossing number is $159$, while the median is $109$. The friend with a diagram of maximal crossing number is a friend of $K12a951$ and has $1259$ crossings. For the {\em SnapPy} census knots, the average crossing number is $53$, the median is $42$, and the maximal crossing number diagram of a friend has $321$ crossings. The census knots with a friend are listed in \Cref{tab:census}. In particular, this recovers all friends found in~\cite{Kegel_Weiss}.

By analyzing our data, we observe that no fibered and strongly quasi-positive knot shows up in our tables. Among the low-crossing number knots, there are exactly $42$ (i.e.\ around 1\%) fibered and strongly quasi-positive (or strongly quasi-negative) knots~\cite{KnotInfo}, while  exactly $739$ of the {\em SnapPy} census knots (i.e.\ around 60\%) are strongly quasi-positive (or strongly quasi-negative) and fibered~\cite{CensusKnotInvariants}. This motivates the following conjecture. 

\begin{con}\label{conj:fibSQP}
    Fibered and strongly quasi-positive (or strongly quasi-negative) knots do not have friends.
\end{con}

Note that \Cref{conj:fibSQP} implies an affirmative answer to Question 1.7 from~\cite{Baldwin_sivek_traces}.

\subsection{Piccirillo friends}

Let $K$ be a knot with unknotting number one given by a diagram $D$ with an {\em unknotting crossing} $x$, i.e.\ a crossing such that $D$ after changing the crossing at $x$ represents the unknot. In \cite{Piccirillo_Conway_knot}, Piccirillo describes a simple, elegant method for constructing a knot $K_x$, the \emph{Piccirillo friend} of $K$, whose trace is diffeomorphic to the trace of $K$. If $D$ and $x$ are given, the construction is algorithmic, see \Cref{alg:Piccirillo}. An implementation of the algorithm was provided in \cite{Kegel_Weiss}. Using this code, we construct the Piccirillo friends for all low crossing number knots and all {\em SnapPy} census knots with unknotting number one.

As the notation suggest the Piccirillo friend construction is independent of the diagram $D$. Instead, it needs as input an {\em unknotting arc} $x$ (i.e.\ an arc connecting two points on the knot such that, if projected onto each other in a diagram, the corresponding crossing becomes an unknotting crossing). 

Piccirillo friends of twisted Whitehead doubles are isotopic to the original knot; see for example \Cref{prop:Wdouble} below. Regarding the converse, we verify that for all but the $15$ low crossing number knots listed in Table~\ref{tab:twist}, the knot $K_x$ is not isotopic to $K$. These $15$ knots are readily checked to be twist knots, and hence twisted Whitehead doubles of the unknot, see \Cref{prop:dual_census} below. This motivates the following conjecture.

\begin{conj:WHD}
The Piccirillo friend $K_x$ of a knot $K$ is isotopic to $K$ if and only if $K$ is a twisted Whitehead double.
\end{conj:WHD}

\begin{rem}
    \Cref{conj:WHD} does not contradict \Cref{conj:fibSQP}. Indeed, for a strongly quasi-positive knot, the $3$-genus agrees with its smooth $4$-genus~\cite{Rudolph}, which is a lower bound on the unknotting number. Thus, a strongly quasi-positive knot with unknotting number $1$ also has $3$-genus $1$. The only fibered knots with $3$-genus $1$ are the trefoil and the figure eight knot. Both have no friends by~\cite{Gabai}. 
    
    We also remark that \Cref{conj:fibSQP} is about isotopy. There exist knots $K$ that are not isotopic to their mirrors $\overline{K}$ such that $K$ and $\overline{K}$ are friends. These can also arise as Piccirillo friends. The simplest such knot is $K13n469$. We refer to \cite{Kegel_Weiss} for details.
\end{rem}

\begin{table}[htbp] 
\caption{The twist knots among the low crossing number knots and the {\em SnapPy} census knots.}
\label{tab:twist}
\begin{tabular}{@{}llllllll@{}}
\toprule
  $K3a1$ & $K4a1$ & $K5a1$ & $K6a3$ & $K7a4$ & $K8a11$ & $K9a27$ & $K10a75$ \\ 
  $K11a247$ & $K12a803$ & $K7\_2$ & $K8\_1$ & $K8\_2$ & $K9\_1$& $K9\_2$ & \\
\bottomrule
\end{tabular}
\end{table}

\subsection{Concordance friends}

In the last part of the article, we present another (so far unsuccessful) method to search for friends with different sliceness status. We say that two knots $K$ and $K'$ are \textit{concordance friends} if there exists a sequence of knots 
$$K=K_1,K_2,K_3,\ldots,K_{n-1},K_n=K'$$ 
such that for every $i$, the knots $K_i$ and $K_{i+1}$ are either concordant or friends. Since concordant knots have the same smooth $4$-genus and thus the same sliceness status, we immediately obtain the following.

\begin{cor}
    If there exist concordance friends with different sliceness statuses, then there exists a smooth $4$-manifold that is homotopy equivalent to $S^4$ but not diffeomorphic to $S^4$.\qed
\end{cor}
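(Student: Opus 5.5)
The plan is to reduce the corollary to the known implication for friends, using the fact that the chain relation only involves two kinds of steps, each of which preserves enough four-dimensional information. The basic input, recalled in the introduction, is this: if $K$ and $K'$ are friends and $K'$ is smoothly slice, then $K$ is slice in a homotopy $4$-ball. To see this, glue the trace of $K$ to the exterior of the trace of $K'$ along the common $0$-surgery. Concretely, take the trace $X_K$ (the $4$-ball with a $0$-framed $2$-handle attached along $K$). Remove from $B^4$ a neighbourhood of a slice disk $D'$ for $K'$, obtaining $W'$ with $\partial W' = K'(0)$. Then glue $X_K$ to $-W'$ using the diffeomorphism $K(0)\cong K'(0)$. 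A van Kampen and Mayer--Vietoris computation shows the result $Z$ is a homotopy $4$-sphere: $\pi_1(W')$ is normally generated by a meridian, which dies in $X_K$, and the homology has rank one in degrees $0$ and $4$ only. The closed manifold $Z$ then contains $X_K$, and therefore contains a slice disk for $K$ bounded in the complement of a $4$-ball.

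With this in hand I argue by contraposition along the chain. Assume every smooth homotopy $4$-sphere is diffeomorphic to $S^4$. I claim that the relation ``$K$ is slice'' is then invariant under each step $K_i \leadsto K_{i+1}$.
\begin{itemize}
\item For concordance steps this holds unconditionally: gluing a concordance to a slice disk of $K_{i+1}$ gives a slice disk for $K_i$, and the same works in the other direction.
\item For friend steps, by the construction above, if $K_{i+1}$ is slice then $K_i$ bounds a smooth disk in $Z\setminus B^4$. Here $Z$ is a homotopy $S^4$, hence diffeomorphic to $S^4$ by assumption, so $Z\setminus B^4\cong B^4$ and $K_i$ is slice. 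By symmetry the converse also holds.
\end{itemize}
Inducting along $K_1,\dots,K_n$ shows that $K=K_1$ and $K'=K_n$ have the same sliceness status. This contradicts the hypothesis, so some $Z$ arising from a friend step in the chain is an exotic homotopy $4$-sphere.

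The main obstacle is the friend step, specifically checking that $Z$ really is a homotopy sphere, and in particular that it is simply connected. The trace embedding and the gluing are standard, so this is the only point needing care. I would handle it by noting that $W'$ has $H_1 \cong \Z$ generated by the meridian of $D'$. That meridian is isotopic in $K'(0)$ to the meridian of $K$ up to the gluing map, which must preserve $H_1(K(0))\cong\Z$ up to sign. It bounds the cocore disk of the $2$-handle in $X_K$, which kills $\pi_1(W')$ because $\pi_1$ of a disk exterior is normally generated by a meridian. The homology calculation then follows from Mayer--Vietoris. Finally, simply connectedness together with the homology of $S^4$ gives a homotopy equivalence to $S^4$ by Whitehead and Hurewicz.
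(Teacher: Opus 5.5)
Your proof is correct and follows the paper's route. Concordance preserves sliceness, so along a chain whose ends differ in sliceness some friend step $K_i,K_{i+1}$ must change sliceness. A pair of friends with different sliceness statuses then gives a homotopy $4$-sphere not diffeomorphic to $S^4$. The paper takes this last fact as known from the cited literature (Kirby's Problem 1.19, Freedman--Gompf, Manolescu--Piccirillo). You reprove it via the standard gluing $Z=X_K\cup -(B^4\setminus\nu D')$, which is fine.

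One assertion in your final paragraph is false, although nothing depends on it. Under the identification $K(0)\cong K'(0)$, the meridian of $K'$ is in general \emph{not} isotopic to the meridian of $K$. The meridian of $K$ is isotopic in $K(0)$ to the core of the surgery solid torus, so its exterior is the exterior of $K$. Such an isotopy would therefore make the exteriors of $K$ and $K'$ homeomorphic, and by Gordon--Luecke the two knots would coincide up to mirroring. Agreement on $H_1$ up to sign gives no isotopy.

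Drop that sentence and use $\pi_1(X_K)=1$ instead. Every loop of $\partial X_K$ is null-homotopic in $X_K$. Since $\pi_1(\partial W')$ contains a meridian of $D'$, which normally generates $\pi_1(W')$, van Kampen gives $\pi_1(Z)=\pi_1(W')/\langle\!\langle \pi_1(\partial W')\rangle\!\rangle=1$.

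The Mayer--Vietoris computation likewise does not involve the gluing map. It only needs that $\mu_{K'}$ generates both $H_1(K'(0))$ and $H_1(W')$. It also needs that $H_2(K(0))\to H_2(X_K)$ is an isomorphism, which holds because the $0$-trace has vanishing intersection form.
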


To search for concordance friends, we attach ribbon bands (see \Cref{def:Ribbon_band}) to a given knot $K$ to create knots that are concordant to $K$.

\begin{prop:algo2}[Concordance search]
    There exists a practical and implemented algorithm that takes as input a knot diagram of a knot $K$ and outputs arbitrarily many knots $K'$ $($presented as diagrams$)$ that are concordant to $K$.
\end{prop:algo2}

In Section~\ref{sec:concordance_friends}, we explain \Cref{prop:algo2} in more detail and combine it with \Cref{alg:friend_search} to obtain an algorithm that searches for concordance friends of an input knot, see \Cref{algo:concordance_friend_search}. As an application, we explain a new strategy to search for concordance friends with different sliceness status, see \Cref{algo:conc_friends}. We also apply our code to find concordance friends of many torus knots, cables, and Whitehead doubles. Note that it follows from~\cite[Theorem 1.2]{BKM_traces} that any knot has a concordance friend (that is not obviously concordant to $K$).

\subsection*{Code and data}
The code and additional data are available at~\cite{data}.

\subsection*{Acknowledgments}
We thank Nathan Dunfield, Shelly Harvey, Lisa Piccirillo, and Arunima Ray for useful discussions. We use code and data from~\cite{SnapPy,Regina,sagemath,Dunfield_census,Dunfield_exterior_to_link,FPS_code,BKM_alt,BKM_QA,Kegel_Weiss,KnotInfo,CensusKnotInvariants,KLO,KnotJob,Dunfield_Gong}. Figures \ref{fig:K6a2} and \ref{fig:Conway} were created with KLO~\cite{KLO} and Figures \ref{fig_Wp_friend} and \ref{fig_Wn_friend} with {\em SnapPy}~\cite{SnapPy}.

\subsection*{Individual grant support}
MK is supported by the DFG, German Research Foundation, (Project: 561898308); by a Ram\'on y Cajal grant (RYC2023-043251-I) and the project PID2024-157173NB-I00 funded by MCIN/AEI/10.13039/50110001\-1033, by ESF+, and by FEDER, EU; and by a VII Plan Propio de Investigación y Transferencia (SOL2025-36103) of the University of Sevilla. JS is supported by the Australian Research Council under the Discovery Grant scheme, grant number DP220102588.

\section{The friend search}\label{sec:friend_search}

In this section, we present \Cref{alg:friend_search} to search for a friend of a given knot $K$. 

\begin{algorithm}
\caption{Algorithm to search for friends of a given knot $K$.}
\label{alg:friend_search}
\begin{description}
    \item[Input] A knot diagram $D$ representing the knot $K$.
    \item[Output] A (possibly empty) list of knot diagrams representing friends of $K$.
\end{description}
\begin{enumerate}
    \item In {\em SnapPy}, construct an ideal triangulation of the knot complement of $K$.
    \item Create the $0$-surgery $K(0)$ on this ideal triangulation.
    \item Send $K(0)$ to {\em Regina} to obtain a $1$-vertex triangulation $T$ of $K(0)$. 
    \item For each edge $e$ in $T$:
    \begin{enumerate}
        \item Drill out $e$ to obtain an ideal triangulation of $E=K(0)\setminus e$.
        \item Check if $E$ is diffeomorphic to the complement of a knot $K'$ in $S^3$:
            \begin{enumerate}
                \item Check if $H_1(E)$ is isomorphic to $\Z$. If it is not, $E$ is not a knot complement, skip to the next edge $e$.
                \item If $H_1(E)$ is isomorphic to $\Z$, look up $E$ as a triangulation of a known knot complement. If it is, add $E$ to a list of potential friends, and skip to the next edge.
                \item If not, send $E$ to {\em SnapPy} and search for a hyperbolic structure.
                \item If {\em SnapPy} finds a hyperbolic structure, compute all slopes of length less than $6$. 
                \item For each such slope $s$:
                \begin{enumerate}
                    \item Create the filled manifold $E(s)$.
                    \item Check if $E(s)$ is diffeomorphic to $S^3$ (by either simplifying its fundamental group, or running $3$-sphere recognition in {\em Regina}).
                    \item If it is, add $E$ to the list of potential friends, skip to next edge $e$.
                \end{enumerate}
            \end{enumerate}
    \end{enumerate}
    \item Using a mix of knot invariants and comparisons, sort out duplicates in the list of potential friends to obtain a list of knot exteriors representing certifiably distinct knots different to $K$.
    \item If the list of friends is empty (and some upper bound on running time is not met), apply Pachner moves to $T$, and go back to (4).
    \item If friends have been found (represented by ideal triangulations), create their diagrams using the method described in~\cite{Dunfield_exterior_to_link}.
\end{enumerate}
\end{algorithm}

\subsection{Correctness and caveats of \Cref{alg:friend_search}}

In this section, we discuss why \Cref{alg:friend_search}, modulo some caveats and adjustments, will find any given friend $K'$ of the input knot $K$ in finite time.

For Steps (1)-(3), note that {\em SnapPy} uses an algorithm originally due to Weeks \cite{WeeksTriangulation} to construct an ideal triangulation of the knot complement of $K$ from a diagram of $K$. The algorithm is efficient and guaranteed to return an ideal triangulation with number of tetrahedra at most four times the number of crossings of the input diagram, plus a small additive constant. Permanently filling the torus cup of the resulting ideal triangulation and sending it to Regina to obtain a $1$-vertex triangulation $T$ of the closed $3$-manifold $K(0)$ is again deterministic and efficient.

For Step (4), note that in a $1$-vertex triangulation of a closed $3$-manifold, every edge is a loop edge, and hence an embedding of the circle into that manifold. In particular, every edge $e$ of $T$ represents a knot in $K(0)$. Moreover, every (PL) embedding of a circle into a closed $3$-manifold can be triangulated with the circle being represented as a sequence of edges. Using standard moves and simplification procedures, this circle can always be transformed into a loop, and the surrounding triangulated $3$-manifold can always be turned into a $1$-vertex triangulation. The latter statement technically has a very weak and detectable caveat that does not play a role in our situation, see \cite{burton12-unknot} for details. It follows that, if $K$ has a friend $K'$, then there exists a triangulation $T'$ of $K(0)$ with an edge representing $K'$. Moreover, as constructions from~\cite{knotted_edge,knot_factorisation} show, such a triangulation need not necessarily be excessively large. Hence, we can hope for $K'$ to be represented by an edge in a triangulation $T'$, where $T'$ is either $T$ itself or a slight modification thereof. In fact, this hope is confirmed by the results of our experiments as presented in \Cref{tab:low_crossing}. 

Step (4)(b) is implemented as a heuristic. Computing the homology of a manifold is a straightforward, polynomial time procedure. Looking up $E$ in a list of known knot complements is as well. This can fail, however, if $E$ does not simplify to a knot complement triangulation in the database. In rare instances, we may even miss a case where $E$ is the complement of input knot $K$. However, theoretically this can be avoided since the homeomorphism problem for $3$-manifolds is solved~\cite{Kuperberg}, and practically, this can be avoided by computing enough knot invariants -- revealing that this problem almost never appears. Similarly, {\em SnapPy} may not find a hyperbolic structure on $E$, despite $E$ being hyperbolic. Experiments with Dunfield's {\em Regina} recognition code~\cite{Dunfield_census}, cf.~\cite{FPS_code}, show that this almost never happens. Theoretically, it is decidable if a manifold is hyperbolic~\cite{decide_hyperbolic}, and thus the algorithm can be made rigorous here as well. Moreover, if {\em SnapPy} does not find a hyperbolic structure on $E$, we can also use Dunfield's {\em Regina} recognition code to check if $E$ is the complement of a non-hyperbolic knot in $S^3$. 

For Step (4)(b)(iv), it follows from the 6-theorem~\cite{Ag00,La00} that, if $E$ is a knot complement, then there exists a filling along a slope of length less than $6$ yielding $S^3$. Moreover, since $3$-sphere recognition is solved and implemented in {\em Regina}, if a filling yields $S^3$, we are guaranteed to find it (although, in practice, we do not run {\em Regina}'s $3$-sphere recognition routine, but a heuristic to simplify the fundamental group of the filled manifold, using the resolution of the Poincar\'e conjecture~\cite{perelman1,perelman2}). 

Finally, for every triangulation of a knot complement $E$ in $S^3$ found by our algorithm, by construction, the corresponding knot $K'$ admits a Dehn filling to $K(0)$. Since a $p/q$-surgery on a knot $K$ has first homology isomorphic to $\Z_p$, it follows that the $0$-surgery of $K'$ is diffeomorphic to $K(0)$ and thus $K$ and $K'$ are friends. 

In Step (6), since any two $1$-vertex triangulations of the same closed 3-manifold are connected by a finite sequence of Pachner moves \cite{matveev03-algms,Pachner}, and assuming that we never miss identifying a knot complement in $S^3$, any exhaustive breadth first search through the Pachner graph will find every friend of $K$ in finite time. In practice, the {\em Regina} function \texttt{retriangulate} performs exactly that up to a specified depth, and we use this function for our friend search. However, this explores many very similar triangulations with edges representing very similar curves. To counterbalance this, we also run a variant where the next triangulation is determined by the Markov chain style random walk from \cite{Altmann02012026}. In both methods, we keep a list of triangulations of $K(0)$, and candidate knot complements $E$ to avoid testing the same triangulations and curves multiple times.

For Step (7), given a triangulation of the complement $E$ of a friend $K'$, we use the method from~\cite{Dunfield_exterior_to_link} to turn $E$ into a diagram of $K'$. For this, we save the $S^3$-filling of $E$, change the basis on the cusp of $E$ such that $1/0$ corresponds to the meridian, and then run the method to create a diagram of $K'$. In a second step, we use {\em SnapPy} to simplify the initial diagram and finally record its PD code. Note that this method is a heuristic and is not always guaranteed to work. But again theoretically there is an algorithm creating a knot diagram as a corollary of the resolution of the homeomorphism problem for $3$-manifolds with toroidal boundary. 

\subsection{Other methods to find friends}
\label{rem:options}

If the $0$-surgery $K(0)$ of $K$ is hyperbolic, there are two other options to drill curves from $K(0)$ as done in~\cite{Dunfield_exterior_to_link} and~\cite{Dunfield_Gong}. There, curves $c$ are enumerated by considering paths in the $1$-skeleton of a dual cell-decomposition, or by drilling out short geodesics. Both options work in {\em SnapPy} without the use of {\em Regina}. 
\begin{enumerate}
    \item For the first option we take the triangulation $T$ of $K(0)$ and create closed loops $c$ in the $1$-skeleton of the dual cell-decomposition of $T$. Using {\em SnapPy}, we drill out these curves $c$ and proceed as in the above algorithm. By modifying $T$, we obtain different dual curves. However, since the way $T$ is modified -- as implemented in {\em SnapPy} -- is not exhaustive, this search sometimes fails to find existing friends. 
    \item For the second option, we compute the length spectrum of $K(0)$ up to a certain bound of the length. Then we drill out the geodesics in the length spectrum and proceed as above. Note that this algorithm is at the moment not guaranteed to find any given friend, since it is not known if the dual curve of a friend $K'$ is always a geodesic. For further discussion, we refer to~\cite{Dunfield_Gong}.
\end{enumerate}
We implemented the above two methods. They find all low-complexity friends that were found by \Cref{alg:friend_search}. However, there are significant differences in running times. In some examples, \Cref{alg:friend_search} is much quicker to find friends, while in other examples, the methods presented here are quicker. Thus, in practice, running all three methods in parallel may be optimal. We refer to \Cref{sec:runtime} for further comparison of the runtimes of the different methods.

\subsection{Notable results of running \Cref{alg:friend_search}}

We present a few examples of friends that we found by running \Cref{alg:friend_search}.

\begin{ex}
The simplest knot for which we found a friend is the knot $K=K6a2$ shown on the left of \Cref{fig:K6a2}. \Cref{alg:friend_search} shows that $K$ has the $20$-crossing friend $K'$ shown on the right of \Cref{fig:K6a2}. From that diagram, it is straightforward to see that $K'$ is only one crossing change away from a ribbon knot and thus the smooth $4$-genus of $K'$ is at most $1$. Since $K$ has topological $4$-genus $1$ it follows that $K'$ also has topological and thus smooth $4$-genus $1$~\cite{Freedman}. 

Note that $K$ has unknotting number $1$, and $K'$ can also be recovered as a Piccirillo friend, see Section~\ref{sec:Piccirillo}.
\end{ex}

\begin{figure}[htbp]
     \centering
     \includegraphics[width=.8\textwidth]{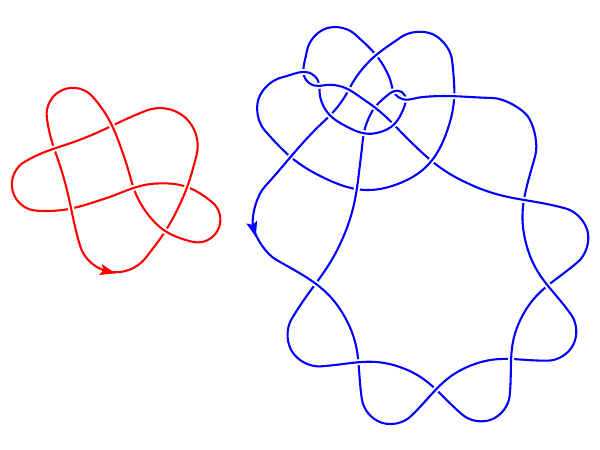}  
     \caption{Left: A diagram of $K6a2$. Right: A diagram of a $20$-crossing friend of $K6a2$.}
     \label{fig:K6a2}
 \end{figure}

\begin{ex}\label{ex:Conway}
 For the Conway knot $K=K11n34$, \Cref{alg:friend_search} finds four different friends $K_1,K_2,K_3,K_4$. These are distinguished from $K$ by their volumes, as shown in Table~\ref{tab:volumes}. $K_3$ is the Piccirillo friend of $K$ as constructed in~\cite{Piccirillo_Conway_knot}. The knot $K_4$ shares $4$ surgeries with $K_3$. The knots $K_3$ and $K_4$ are the knots shown in~\cite[Figure 1]{Kegel_Piccirillo} (there for $n=-2$). They can be distinguished by computing their HOMFLYPT polynomials.

 The two friends $K_1$ and $K_2$, shown in \Cref{fig:Conway}, are both new, and we do not have a Kirby calculus proof that they are friends of the Conway knot. We also do not know if they have the same trace as $K$. However, we can compute their $s$-invariants to be non-trivial and check that they are only one crossing change away from a ribbon knot. Thus $K_1$ and $K_2$ both have smooth $4$-genus $g_4=1$.
\end{ex}

\begin{table}[htbp] 
\caption{Volumes, crossings, and smooth $4$-genus $g_4$ of the friends of the Conway knot.}
\label{tab:volumes}
\begin{tabular}{@{}cccc@{}}
\toprule
 knot & volume & crossings & $g_4$ \\
\midrule
  $K11n34$ & $11.21911772538?$ & $11$& $1$\\
  $K_1$ & $13.2204588706?$ & $34$& $1$ \\
  $K_2$ & $14.409961289?$ & $45$& $1$ \\
  $K_3$ & $14.301163110?$ & $55$& $1$ \\
  $K_4$ & $14.301163110?$ & $55$& $1$ \\
\bottomrule
\end{tabular}
\end{table}
    
   \begin{figure}[htbp]
     \centering
     \includegraphics[width=.49\textwidth]{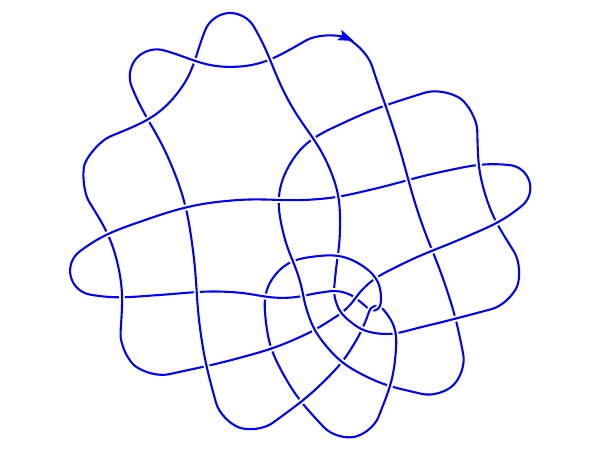}  \includegraphics[width=.5\textwidth]{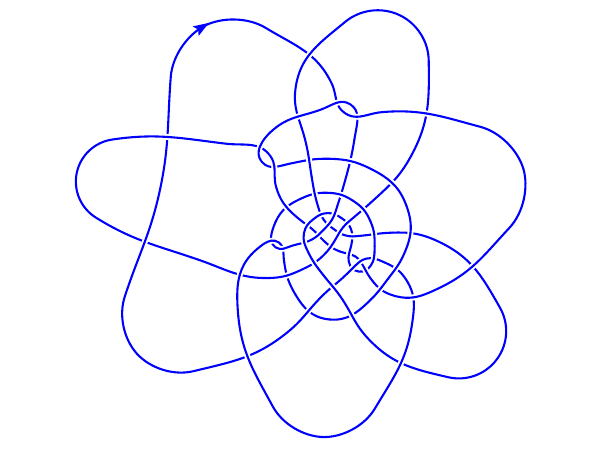}  
     \caption{Two more friends of the Conway knot.}
     \label{fig:Conway}
 \end{figure}
    
\section{A census of friends}\label{sec:census}

In this section, we apply \Cref{alg:friend_search} to the low crossing number knots and the {\em SnapPy} census knots to prove the following.

\begin{thm}\label{thm:census_friends}\hfill
\begin{enumerate}
    \item Among the $2977$ low crossing number knots, at least $1867$ have a friend.
    \item Among the $1267$ {\em SnapPy} census knots, at least $ 183$ have a friend.
\end{enumerate}
These knots are displayed in Tables~\ref{tab:low_crossing} and \ref{tab:census}. PD codes of diagrams of the friends can be found at~\cite{data}.
\end{thm}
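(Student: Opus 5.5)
The statement is a computational existence result, so the proof is a certificate-based computation. For each knot $K$ in the two datasets we exhibit a knot $K'$ together with a verifiable witness that $K'(0)\cong K(0)$ and that $K'$ is not isotopic to $K$. The counts $1867$ and $183$ are then the number of knots for which such a witness was produced. Since the statement says ``at least'', no claim is made about the remaining knots, and we only need soundness of each positive certificate, not completeness of the search.

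\textbf{Step 1: find candidate friends.} We run \Cref{alg:friend_search} on every low crossing number knot, given by its DT code from~\cite{HTW_table}, and on every {\em SnapPy} census knot. For each input we stop as soon as one candidate friend is found or a time/depth bound in the Pachner graph search is reached. We run the alternatives of \Cref{rem:options} in parallel to catch friends the edge search misses.

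\textbf{Step 2: certify each candidate.} A candidate is an ideal triangulation $E$ obtained by drilling an edge of a triangulation of $K(0)$, together with a slope $s$ such that $E(s)$ has trivial fundamental group. Hence $E(s)\cong S^3$ by the Poincar\'e conjecture, or directly by {\em Regina}'s $3$-sphere recognition. So $E$ is the exterior of a knot $K'$ whose dual filling is $K(0)$. The argument of the correctness subsection then shows that this filling is the $0$-filling, since $H_1(K(0))\cong\Z$ forces $p=0$. Hence $K'(0)\cong K(0)$.

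To make this rigorous rather than heuristic, we record the triangulation, the drilled edge and the slope, so the identification $E(\text{dual slope})\cong K(0)$ holds by construction. The $S^3$-filling is checked by a simplified presentation of $\pi_1$ with no generators, or by $3$-sphere recognition. Step (7) converts $E$ into a diagram, and we then verify independently that the exterior of the diagram's knot is isometric or combinatorially isomorphic to $E$ using {\em SnapPy}'s verified isometry check. This is what makes the PD codes at~\cite{data} valid certificates.

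\textbf{Step 3: distinguish $K'$ from $K$, and assemble the count.} We prove $K'\neq K$ by a certified invariant; the mirror must also be excluded, or is allowed, as per the convention of the remark on $K13n469$. For hyperbolic exteriors we use verified volumes or verified canonical triangulations and isometry signatures. Otherwise, or if volumes agree, we use polynomial invariants such as HOMFLYPT, Khovanov homology, or Alexander data. Since $K$ and $K'$ are friends, many invariants coincide, for instance the Alexander polynomial. So in practice we rely mainly on hyperbolic volume and isometry signatures of the exteriors, which are typically different. Finally we tabulate the knots with a certified friend, producing Tables~\ref{tab:low_crossing} and~\ref{tab:census}, and count them.

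I expect the main obstacle to be Step 2's rigor and Step 3's distinction in the few hard cases. There a friend's exterior may be non-hyperbolic, or numerically close to that of $K$. Heuristic hyperbolic structures or unsimplified $\pi_1$ presentations must then be replaced by verified computations (interval arithmetic in {\em SnapPy}, or {\em Regina} recognition). Any knot where certification fails is simply omitted, which the ``at least'' formulation permits.
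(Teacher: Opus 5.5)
Your proposal is correct and follows essentially the same route as the paper's proof. Both run the friend search together with the alternative drilling methods, use the $S^3$-filling and the homology argument to force the $0$-slope, separate $K'$ from $K$ by volumes and then Jones or HOMFLYPT polynomials, and finish with a diagram check. The only difference is that the paper's final sanity check compares $K(0)$ and $K'(0)$ directly in \emph{SnapPy} and \emph{Regina}, whereas you compare the diagram's exterior with $E$.
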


\begin{proof}
    The proof goes by running our algorithm. Below, we explain the setup of our experiment resulting in the statement of \Cref{thm:census_friends}.

    Although, in theory, \Cref{alg:friend_search} has an astronomical worst-case running time, it is fast in practice. Still, the running time strongly depends on the number of tetrahedra in the starting triangulation $T$ of the $0$-surgery $K(0)$, and performs well on the low-crossing number knots and the {\em SnapPy} census knots.

    We run our code multiple times over the data sets of the low-crossing number knots and the {\em SnapPy} census knots, each time using \Cref{alg:friend_search} with the exhaustive \texttt{retriangulate} option, the random walk option, as well as the two other methods from \Cref{rem:options}. After each iteration, we remove the knots for which we already found friends and increase the search complexity of the code (e.g., the number of triangulations we try, the bound on the length spectrum, and by letting {\em SnapPy} work harder to enumerate the short slopes). 

    For several knots, we found more than one friend. But in our data, we only list the friend that we found first, which is not necessarily the simplest one.

    Finally, we verify our results. Note that the returned knots $K'$ might in principle be isotopic to $K$. For the verification, we first compute the volumes of $K$ and $K'$. If this fails to distinguish $K$ and $K'$, we compute their Jones or HOMFLYPT polynomials. In a final step, we build the $0$-surgeries $K(0)$ and $K'(0)$ and use {\em SnapPy} and {\em Regina} to verify that they are diffeomorphic. (Note that this last step is only a sanity check, since our algorithm already guarantees this.)
\end{proof}

\begin{rem}
    We do not make any claim about the completeness of the data. It is very likely that there are low-crossing number or census knots that do not appear in our tables, but still admit friends. Our code is merely suggesting that these friends are harder to find.  
\end{rem}

\section{Experiments and running times}\label{sec:runtime}

In this section we give details on running times and a comparison of the performance
of our two variants of \Cref{alg:friend_search}, one based on a breadth first exhaustive
exploration of the Pachner graph (labeled ``exhaustive''), one based on the Markov chain Monte Carlo-style random walk from \cite{Altmann02012026} (labeled ``MCMC''). For completeness, we also compare these algorithms with the methods of drilling out 
``dual curves'' and ``short geodesics'' from \cite{Dunfield_exterior_to_link,Dunfield_Gong}, as described in \Cref{rem:options}.

Interestingly, all four methods have complementary strengths. They efficiently find 
some friends of knots where other methods require longer running times and vice
versa. Moreover, all four methods -- in principle and possibly after some adjustments -- will find every knot friend of a given knot with probability approaching to one as we increase computational resources.\footnote{For the ``short geodesics'' method this is only conjectured and not proven, as discussed in \Cref{rem:options}.}

We compare our methods in two different ways: running times and ``number of curves examined''. Where 
methods are randomised\footnote{Note that all four methods have an element of randomness stemming from the heuristics we use in various steps of our algorithms. We ignore these smaller randomisations and solely focus on significant random effects.}, we report timings and number of attempts from multiple runs. In all of our data, we only compare running times / number of candidates tested for knots that are known to admit friends. Results for unsuccessful friend searches are harder to compare since it is unclear how ``close'' our different methods were to finding a knot friend.

As benchmark for our comparisons, we choose the $47$ knots up to $9$ crossings listed in \Cref{tab:low_crossing}. In our timing runs, we found friends for $46$ of them. In \Cref{tab:comparison} we list which method found which knot.

{\fontsize{9}{11}\selectfont
\begin{table}[htbp] 
\caption{Comparison of the effectiveness of four friend search methods for knots with friends up to $9$ crossings. A cross means a friend was found in at least one of the timing runs.}
\label{tab:comparison}
\begin{tabular}{l|c|c|c|c}
Name & dual curves & short geodesics & exhaustive & MCMC \\
\hline
\hline
K6a1&&&x&x \\
\hline
K6a2&&&x&x \\
\hline
K7a1&&x&x&x \\
\hline
K7a2&&x&x&x \\
\hline
K8a1&x&x&x&x \\
\hline
K8a3&x&x&x&x \\
\hline
K8a4&x&x&x&x \\
\hline
K8a5&x&x&x&x \\
\hline
K8a6&&x&x&x \\
\hline
K8a7&x&x&x&x \\
\hline
K8a9&&x&x&x \\
\hline
K8a10&x&x&x&x \\
\hline
K8a12&&&x&x \\
\hline
K8a14&x&x&x&x \\
\hline
K8a15&x&x&x&x \\
\hline
K8a16&&&x&x \\
\hline
K8a17&&x&x&x \\
\hline
K8n1&&&& \\
\hline
K8n2&&x&x&x \\
\hline
K9a1&x&x&x&x \\
\hline
K9a2&x&x&x&x \\
\hline
K9a3&x&x&x&x \\
\hline
K9a4&x&x&x&x \\
\hline
K9a5&x&x&x&x \\
\hline
K9a6&&x&x&x \\
\hline
K9a7&&x&x&x \\
\hline
K9a8&&x&x& x\\
\hline
K9a10&x&x&x&x \\
\hline
K9a11&x&&x&x \\
\hline
K9a12&x&x&x&x \\
\hline
K9a13&&&x&x \\
\hline
K9a14&x&&x&x \\
\hline
K9a15&&&x&x \\
\hline
K9a17&x&x&&x \\
\hline
K9a18&&x&x&x \\
\hline
K9a21&&x&x&x \\
\hline
K9a22&x&x&x&x \\
\hline
K9a28&x&x&x&x \\
\hline
K9a29&x&&x&x \\
\hline
K9a31&x&&&x \\
\hline
K9a32&x&x&x&x \\
\hline
K9n1&x&x&x&x \\
\hline
K9n2&x&x&x&x \\
\hline
K9n4&&x&x&x \\
\hline
K9n6&&&&x \\
\hline
K9n7&&&x&x \\
\end{tabular}
\end{table}}

As we can see, ``MCMC'' proved to be most effective, with ``dual curves'' lagging behind. Running times of all four methods are compared in \Cref{tab:times}. Here, we can see that ``dual curves'' performs best. 

\begin{figure}[htbp] 
\caption{Running times of all four methods compared. For randomised methods ``exhaustive'' and ``MCMC'', results from multiple runs are presented.}
\label{tab:times}
\includegraphics[width=\textwidth]{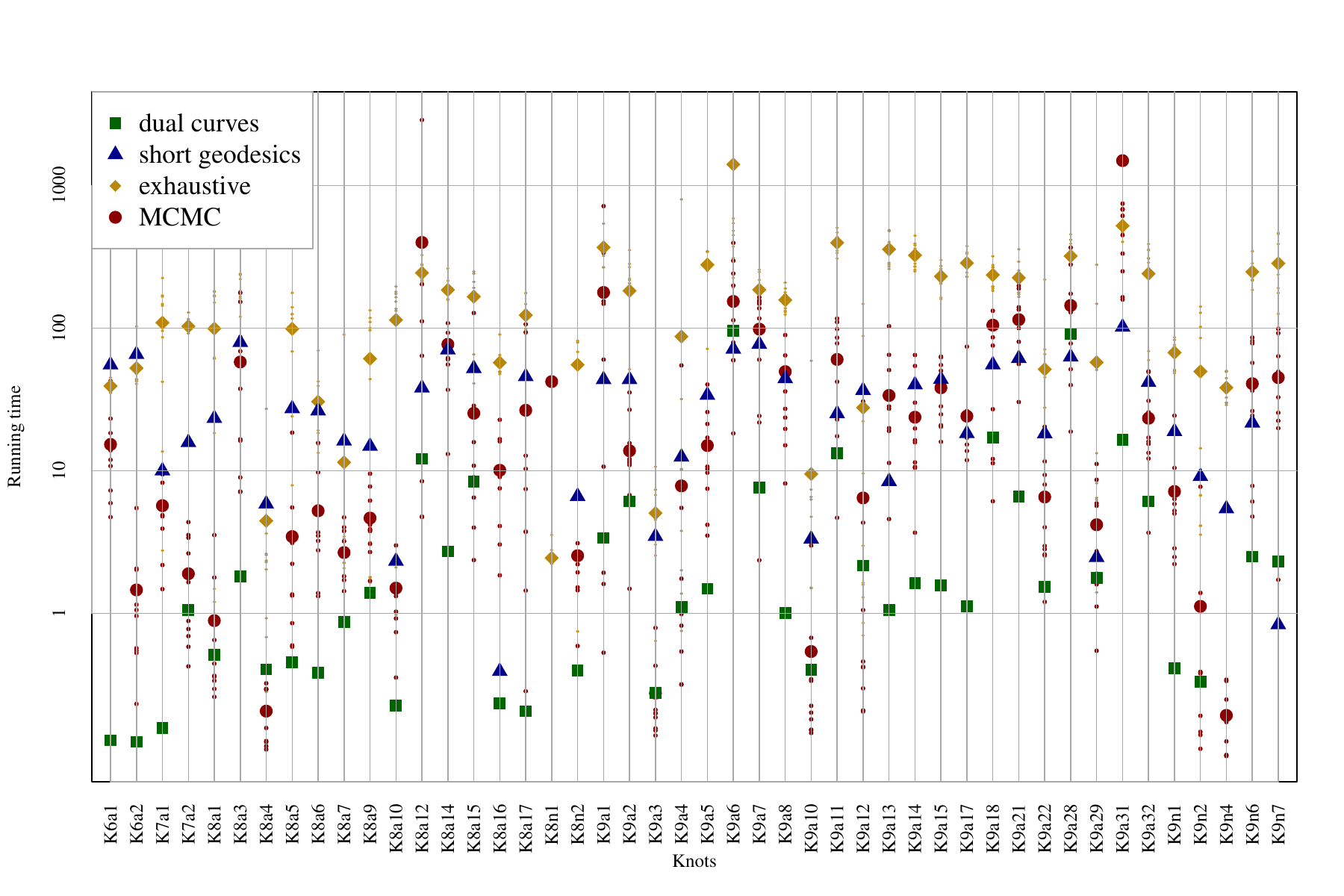}
\end{figure}

In \Cref{tab:candidates}, we compare how many dual curves, geodesics, and edges in triangulations had to be tested, respectively, until a friend was found. While this method does not directly translate into running times, it gives an idea of how efficient our methods search the space of all possible configurations.

\begin{figure}[htbp] 
\caption{Number of candidate curves to check until a friend is found, ranging over all four methods.}
\label{tab:candidates}
\includegraphics[width=\textwidth]{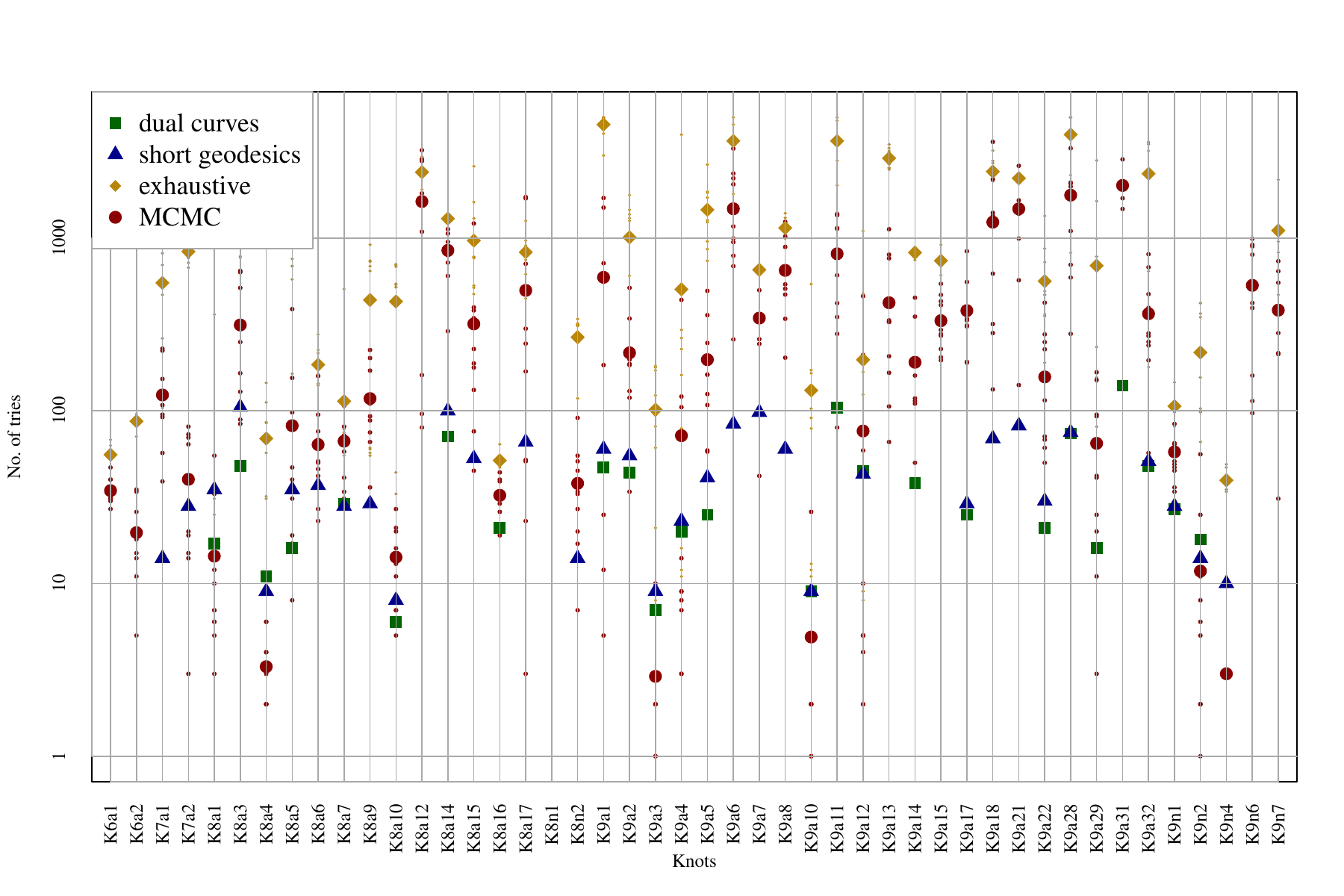}
\end{figure}

\section{Piccirillo friends}\label{sec:Piccirillo}

In order to show that the Conway knot is not slice, Piccirillo describes a process to construct a friend for every knot with unknotting number~$1$.  

\begin{thm} [Piccirillo friend~\cite{Piccirillo_Conway_knot}]\label{thm:Piccirillo}
    Let $K$ be a knot with unknotting number $1$ and let $x$ be an unknotting crossing of $K$. Then there exists a knot $K_x$ such that $K_x$ and $K$ have diffeomorphic $0$-surgeries.\footnote{Note that in some situations it might happen that $K_x$ is isotopic to $K$, see \Cref{prop:Wdouble} below.} 
\end{thm}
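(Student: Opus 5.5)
The argument follows Piccirillo's dualizable pattern (RBG link) construction from \cite{Piccirillo_Conway_knot}: build a single three-component link that encodes the $0$-surgery of $K$ in two different ways, and read off $K_x$ from the second way.

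\emph{Step 1: the crossing circle.} Since $x$ is an unknotting crossing, $K$ is obtained from the unknot by a full twist at $x$. So let $c$ be a small unknot encircling the two strands of $K$ at $x$; it bounds a disk meeting $K$ in two points of opposite orientation, so $\operatorname{lk}(c,K)=0$. Then $\pm 1$-surgery on $c$ changes the crossing and turns $K$ into the unknot $U$. Equivalently, $K(0)$ is surgery on a two-component link $U \cup c$, where $U$ carries framing $0$ and $c$ carries framing $\mp 1$, and in which $U$ is unknotted.

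\emph{Step 2: an RBG link.} To put both descriptions on an equal footing, add a $0$-framed meridian $g$ of $c$ (a cancelling pair) and slide so that we obtain a link $R\cup B\cup G$ with these properties:
\begin{itemize}
\item $R$ is a $0$-framed unknot;
\item $B$ and $G$ each become unknots after surgery on $R$, and also after slides over $R$;
\item $\operatorname{lk}(B,G)$ and the framings are arranged so that $H_1$ is $\mathbb{Z}$.
\end{itemize}
Piccirillo's lemma applies here. Cancelling $R$ with $G$ (sliding everything off $G$ using $R$) leaves $B$ as a $0$-framed knot, namely $K$. Cancelling $R$ with $B$ instead leaves $G$ as a $0$-framed knot $K_x$. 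Both operations are handle slides and cancellations in the same Kirby diagram, so $K(0)\cong K_x(0)$; in fact the traces agree, since only $1$/$2$-handle-type moves on the $2$-handlebody are used.

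\emph{Step 3: framings.} Check that after each cancellation the framing on the surviving component is $0$. The framing changes by the linking and self-linking terms collected during the slides; with $\operatorname{lk}(c,K)=0$ these terms vanish. The homological constraint $H_1(K_x(0))\cong\mathbb{Z}$ also forces framing $0$.

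The main obstacle is Step 2: choosing the framings of $R$ and $G$ and the arrangement of $g$ so that both $B$ and $G$ lie in the complement of $R$ in the way the cancellation lemma requires. I would follow Piccirillo's explicit figure here, taking $R$ to be the meridian arising from the crossing and $G$ the dual curve, and verify the conditions by direct isotopy. The construction is explicit from $D$ and $x$, which is what makes it algorithmic.
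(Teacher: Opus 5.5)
\textbf{There is a genuine gap in Step 2.} Your framework matches the paper's: a three-component link in which $R$ is cancelled against either of the other two components, with $H_1\cong\Z$ forcing the surviving framing to be $0$. Your Step 3 is fine. But all of the content is in Step 2, and there you do not construct the link. The one concrete move you offer fails.

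\emph{The proposed cancelling pair is not one.} A $0$-framed meridian $g$ of the $\mp1$-framed crossing circle $c$ changes the manifold. By a slam dunk, $c$ acquires coefficient $\mp1-1/0=\infty$ and can be deleted. Equivalently, $g$ lets you slide $U$ off $c$, after which $c\cup g$ is a split Hopf pair and can be erased. Hence $S^3_{0,\mp1,0}(U\cup c\cup g)\cong S^3_0(U)\cong S^1\times S^2$. This is not $K(0)$, because $K$ is knotted (Gabai's Property~R). The linking matrix of $U\cup c\cup g$ still gives $H_1\cong\Z$, so your homological bookkeeping would not detect the error.

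\emph{Your bulleted conditions are not the right ones.} If $B$ literally became an unknot in $S^3_0(R)\cong S^1\times S^2$, surgery on it would give $S^1\times S^2\# L(b,1)\neq S^3$. What the cancellation needs is $S^3_{0,b}(R\cup B)\cong S^3\cong S^3_{0,g}(R\cup G)$, for instance because $R\cup B$ and $R\cup G$ are Hopf links.

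\emph{The identification with $K$ is asserted, not derived.} The claim that cancelling $R$ with $G$ leaves exactly $K$ is precisely what a construction has to deliver, and nothing in the proposal delivers it.

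\textbf{The missing ingredient} is the specific link the paper reads off from $x$ via its figure. It is a link $R\cup G\cup B$ in which every two-component sublink is a Hopf link; this is where the hypothesis that $x$ is an unknotting crossing is used. The coefficients are $0$ on $R$ and $G$, and $\mp2$ on $B$ according to the sign of $x$. The argument then runs as follows.
\begin{enumerate}
\item Integer surgery on a knot together with $0$-surgery on a meridian gives $S^3$. So surgery on any two components yields $S^3$ and turns the third into a knot $K_R$, $K_G$ or $K_B$.
\item The full surgery has $H_1\cong\Z$, so each of these knots is $0$-framed, and all three share one $0$-surgery.
\item Explicit handle slides identify $K_B$ and $K_R$ with $K$, and one sets $K_x:=K_G$.
\end{enumerate}
Saying you would ``follow Piccirillo's explicit figure'' without specifying a link with these properties leaves exactly this step open. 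The Step~2 recipe as written would not produce it.
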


\begin{proof}
    From the unknotting crossing $x$ we obtain an ordered $3$-component link $L$ with components $(R,G,B)$ following the procedure shown in \Cref{fig:RGB}. Since $x$ is an unknotting crossing, any two-component sublink of $L$ is a Hopf link. We put surgery coefficients $0$ on $R$ and $G$ and coefficients $\mp 2$ on $B$ ($-2$ if $x$ is a positive crossing and $+2$ if $x$ is negative). Recall that integer surgery on a knot followed by $0$-surgery on a meridian always produces $S^3$. Thus $R(0)G(0)$, $R(0)B(\mp2)$, and $G(0)B(\mp2)$ are all diffeomorphic to $S^3$ and thus the third knot always presents a knot in $S^3$. We call these knots $K_B$, $K_G$ and $K_R$, respectively. Since the homology of the surgered manifold is $\Z$ it follows that all three knots share the same $0$-surgery. By performing explicit handle slides, it is straightforward to check that $K_B$ and $K_R$ are isotopic to $K$. The knots $K_G=K_x$ and $K$ have diffeomorhic $0$-surgeries, and are, in general, not isotopic.
\end{proof}

\begin{figure}[htbp] 
	\centering
	\def\svgwidth{0,9\columnwidth}
	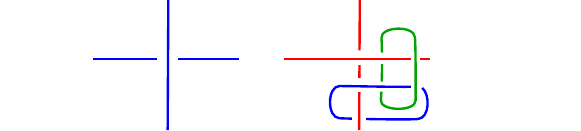
	\caption{Constructing a Piccirillo friend from an unknotting crossing. Note that the red knot is an unknot and thus the surgery on red and blue produces a green knot $K_G$ in $S^3$.}
	\label{fig:RGB}
\end{figure}

\begin{rem}
    For any unknotting crossing, we obtain two Piccirillo friends. Indeed, we can take the same $RGB$ link $L$, but switch the surgery coefficients of $B$ and $G$. Then the same argument as above produces another friend $K'_x$ of $K$. In~\cite{Kegel_Piccirillo} it was observed that $K_x$ and $K'_x$ may be different, but always share $4$ surgeries, cf. $K_3$ and $K_4$ in \Cref{ex:Conway}.
\end{rem}

Note that \Cref{thm:Piccirillo} makes no claim that $K_x$ is different from $K$. In fact, the following proposition describes a general situation in which $K_x$ (and also $K'_x$) are isotopic to $K$.

\begin{prop}\label{prop:Wdouble}
    Let $K$ be a twisted Whitehead double and $x$ an unknotting crossing of $K$. Then the Piccirillo friends $K_x$ and $K'_x$ are isotopic to $K$. 
\end{prop}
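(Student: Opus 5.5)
The plan is to show directly that the $RGB$ link $L=(R,G,B)$ built in the proof of \Cref{thm:Piccirillo} has an extra symmetry when $K$ is a twisted Whitehead double. Such a symmetry would identify the knot $K_G=K_x$ with one of the knots $K_R$ or $K_B$, which we already know are isotopic to $K$.

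First we normalise the input. Write $K=D_\pm(J,t)$, the $t$-twisted positive or negative Whitehead double of a companion $J$. The pattern is the clasp together with a twisted band that follows $J$. We may assume the unknotting crossing $x$ is one of the two clasp crossings. Changing a clasp crossing undoes the clasp, and the band then bounds an obvious disk, so this is the case in which the Piccirillo construction is designed to be applied. If $J$ is nontrivial, one could also try to argue that an unknotting crossing must lie in the clasp. The reason would be that a crossing change away from the pattern clasp leaves a satellite of a nontrivial companion with nontrivial winding-zero pattern (plausibly using Scharlemann--Thompson type arguments). I would state this reduction explicitly and, if it resists proof, read ``unknotting crossing'' as ``clasp crossing''. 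For twist knots ($J$ the unknot) the clasp and twist regions can be exchanged, so the same normal form covers them.

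Next we draw $L$ following \Cref{fig:RGB} in this normal form. The key observation is that everything happens locally in a ball around the clasp, while outside that ball the knot is just the two parallel strands of the twisted band running along $J$. Inside the ball, the curve $R$ (the crossing-change circle, i.e.\ the unknot) and the curve $G$ are arranged around the two clasp strands. The feature specific to Whitehead doubles is that these two strands are the \emph{two ends of the same band}. Because of this, there is an isotopy of $S^3$ that rotates the clasp region by $\pi$, transports the twisted band along itself, and carries the ordered link $(R,G,B)$ to $(G,R,B)$, preserving the relevant framings. I would construct this isotopy as an explicit picture (a half-rotation of the clasp followed by pushing the band's full twists through), and check its effect on framings by computing linking numbers with the $0$-framed components.

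Given this symmetry, the rest is formal. Performing $R(0)B(\mp2)$ and reading off $G$ yields the same knot as performing $G(0)B(\mp2)$ and reading off $R$, so $K_x=K_G\cong K_R\cong K$ by the handle-slide argument in the proof of \Cref{thm:Piccirillo}. For $K'_x$, where the coefficients of $B$ and $G$ are swapped, we run the same argument with the roles of $B$ and $G$ exchanged, using a second symmetry of $L$ that swaps $R$ and $B$. Alternatively, if $G$ and $B$ are interchanged by the rotation composed with a mirror-type move, we can apply the argument to the mirror image, which is again a twisted Whitehead double. This gives $K'_x\cong K$.

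The main obstacle is the middle step: producing the isotopy of $L$ concretely and verifying that it respects the framings $0,0,\mp2$, since the twist parameter $t$ and the sign of the clasp enter there. I would first work out the cases $J$ the unknot with $t=0,\pm1$, which are the twist knots of \Cref{tab:twist}, and then observe that the isotopy is supported near the clasp together with a slide along the band, so that it extends verbatim to arbitrary $J$ and $t$.
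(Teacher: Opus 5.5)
There are two genuine gaps.

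\emph{The reduction to a clasp crossing.} The proposition concerns an arbitrary unknotting crossing, and your heuristic for reducing to the clasp does not work. An unknotting crossing is a crossing in \emph{some} diagram, i.e.\ an arbitrary crossing disk in $S^3$. It need not be compatible with the companion torus of $D_\pm(J,t)$, so nothing guarantees that a crossing change ``away from the clasp'' leaves a satellite of $J$. Reading ``unknotting crossing'' as ``clasp crossing'' proves a weaker statement. The paper closes this step with an external theorem \cite{Unknotting_WHD}: any two unknotting crossings of a twisted Whitehead double other than the figure eight are equivalent. The figure eight is handled separately: by \cite{Gabai} it has no friends, so $K_x$ and $K'_x$ are forced to be isotopic to it. Your remark about exchanging the clasp and twist regions of twist knots is no substitute for this.

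\emph{The main step.} Everything rests on an isotopy of $L$ realising $(R,G,B)\mapsto(G,R,B)$, and this is asserted rather than constructed. Your model of $L$ is also not what the construction produces. All three components are unknots (pairwise Hopf), so the companion $J$ has to be encoded in how they sit. In the construction, the red unknot bounds a strip running along the knot; for $D_\pm(J,t)$ this is essentially the band of the double following $J$. So $R$ is not a small crossing circle, and $L$ is not local to the clasp ball. The paper's proof uses exactly this global feature. Because the strip is embedded for a twisted Whitehead double (in general it has ribbon self-intersections), one can isotope the $RGB$ diagram (\Cref{fig:WHD}) to a position where, after surgery on $R$ and $B$, the green curve is visibly $K$ again. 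Your mechanism (a $\pi$-rotation of the clasp ball plus pushing twists along the band) never uses embeddedness of this strip. Moreover, the ordered-link symmetry you want is strictly stronger than $K_G\cong K$. For most $K$ one has $K_G\not\cong K_R$, so no such symmetry exists, and its existence for Whitehead doubles would itself be the whole content of the proposition. The ``second symmetry'' swapping $R$ and $B$ for $K'_x$ is equally unsupported. Finally, the obstacle you single out, preservation of framings, is a non-issue, since integer framings are preserved by any orientation-preserving isotopy. What is missing is the isotopy itself.
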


\begin{proof}
    Since $K$ is a twisted Whitehead double, it has unknotting number one. In \cite{Unknotting_WHD}, it is shown that any two unknotting crossings of a twisted Whitehead double (that is not the figure eight knot) are equivalent. Thus, we can assume that the unknotting crossing $x$ to construct the Piccirillo friends is one of the two clasp crossings in the Whitehead double construction. (Recall that the figure eight knot admits no friend by~\cite{Gabai}.)

    In \Cref{fig:WHD} we display the $RGB$ link construction for the twisted Whitehead double, which reveals that $K_x$ is again isotopic to $K$. Note that the isotopy from the top left frame to the bottom right frame is only possible if the strip bounding the red unknot has no self-intersection, as for example when it comes from a twisted Whitehead double. (But in general, it will have ribbon self-intersections.) The same construction applies for $K'_x$.
\end{proof}

\begin{figure}[htbp] 
	\centering
	\def\svgwidth{0,9\columnwidth}
	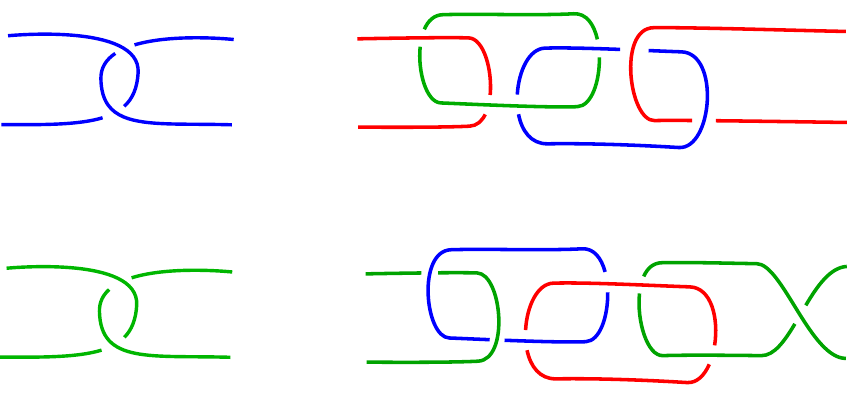
	\caption{The Piccirillo friend of a Whitehead double $K$ is again isotopic to $K$.}
	\label{fig:WHD}
\end{figure}

From~\cite{Piccirillo_Conway_knot}, it follows that the construction of Piccirillo friends is algorithmic. In \cite{Kegel_Weiss}, this was implemented as follows.

\begin{algorithm}
\caption{Piccirillo friend search}
\label{alg:Piccirillo}
\begin{description}
    \item[Input] A diagram of a knot $K$ together with an unknotting crossing $x$.
    \item[Output] Diagrams or triangulations of the complements of Piccirillo friends $K_x$ and $K'_x$.
\end{description}

    \begin{enumerate}
        \item Create a diagram of the $3$-component $RGB$ link $L$ as shown in \Cref{fig:RGB}. 
        \item Use {\em SnapPy} to create a triangulation of the exterior of $L$ and fill it with the correct slopes to obtain triangulations of the knot complements of $K_x$ and $K'_x$. 
        \item Use the heuristic from \cite{Dunfield_exterior_to_link} to create and return diagrams of $K_x$ and $K'_x$. If the heuristic fails, return the triangulations of their knot complements.
    \end{enumerate}
\end{algorithm}

We run this algorithm on the low crossing number knots and the {\em SnapPy} census knots and obtain the following.

\begin{prop}\label{prop:dual_census}\hfill
\begin{enumerate}
    \item $505$ of the low crossing number knots and $84$ of the {\em SnapPy} census knots are known to have unknotting number $1$. 
    \item For all knots in $(1)$ except the $15$ knots listed in \Cref{tab:twist}, there exists an unknotting crossing such that the associated Piccirillo friends are not isotopic to $K$.
    \item The $15$ exceptional knots from \Cref{tab:twist} are all twist knots (and thus twisted Whitehead doubles of the unknot).
\end{enumerate}
\end{prop}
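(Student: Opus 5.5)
This is a computational statement, so the proof will consist of running \Cref{alg:Piccirillo} over the data sets and certifying its output. The three parts are handled separately.

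For part (1), no new computation is needed. The unknotting numbers of the low crossing number knots are taken from KnotInfo~\cite{KnotInfo}, and those of the {\em SnapPy} census knots from the census invariant database~\cite{CensusKnotInvariants}. In both cases we keep exactly the knots recorded with unknotting number $1$. For each such knot we also need an explicit unknotting crossing. To find one, we start from a diagram of $K$ (a DT code, or a diagram produced from the census triangulation via~\cite{Dunfield_exterior_to_link}) and search over its crossings. We switch each crossing in turn and test whether the result is the unknot, using {\em SnapPy} simplification and, if that is inconclusive, $3$-sphere recognition on the $\infty$-filling being trivial / unknot recognition in {\em Regina}. If no crossing of the given diagram works, we randomize the diagram and try again. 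This yields the count $505$ and $84$, together with pairs $(D,x)$.

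For part (2), we run \Cref{alg:Piccirillo} on every pair $(D,x)$, and on several unknotting crossings per knot if needed. This produces triangulated exteriors of $K_x$ and $K'_x$. We then certify that at least one of them is not isotopic to $K$. The first test is hyperbolic volume, which is verified via interval arithmetic in {\em SnapPy}. If the volumes agree, we distinguish the knots by the Alexander, Jones or HOMFLYPT polynomial of the diagram returned by the heuristic. If that also fails, we ask {\em SnapPy} for an isometry of the exteriors preserving meridians, or compare them using {\em Regina}. Since $K_x$ and $K$ share their $0$-surgery, many classical invariants agree (the Alexander polynomial, for instance). Volume and the Jones/HOMFLYPT polynomials are therefore the main distinguishing tools.

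For part (3), we must show that the $15$ knots in \Cref{tab:twist} are twist knots. Each low crossing knot in the table is identified with a standard twist knot diagram with $n$ half-twists, which is immediate from the DT tables. For the census knots $K7\_2,\dots,K9\_2$, we build the twist knot complements in {\em SnapPy}, namely the fillings of the Whitehead link exterior along $1/n$, and verify isometry with the census manifold. This is done with \texttt{is\_isometric\_to}, checking that the meridian is preserved; alternatively, by Gordon--Luecke~\cite{Gordon_Luecke}, isometry of the complements suffices. Twist knots are twisted Whitehead doubles of the unknot, so \Cref{prop:Wdouble} explains why their Piccirillo friends coincide with $K$.

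The main obstacle is part (2) in cases where neither volume nor quick polynomial computations separate $K_x$ from $K$. Such cases could happen because the Piccirillo friend shares many invariants, or because the heuristic fails to produce a manageable diagram. In those cases I would first try other unknotting crossings, and then fall back on rigorous comparison of the exteriors, for example by exhaustive isometry search or by comparing finite cover or homology invariants of the exteriors.
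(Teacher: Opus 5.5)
This is correct and is essentially the paper's own proof: unknotting-number-one lists from the two databases, a diagram search for an unknotting crossing, \Cref{alg:Piccirillo}, verified volumes and HOMFLYPT polynomials to separate $K$ from its Piccirillo friends, and identification of the $15$ leftovers as twist knots, with \Cref{prop:Wdouble} explaining why they are exceptions; your verified isometry check against $1/n$ fillings of the Whitehead link exterior is just a more automated substitute for the paper's drawing of diagrams. Two small fixes: first, part (2) and the paper's proof concern \emph{both} $K_x$ and $K'_x$, so certify both rather than ``at least one''; second, two of your tests are vacuous, since friends always share their Alexander polynomial and the $\infty$-filling of every knot exterior is $S^3$, so the crossing-change check should rely on unknot (solid torus) recognition instead.
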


\begin{proof}
    From \cite{KnotInfo} and \cite{CensusKnotInvariants}, we obtain the lists of knots that are known to have unknotting number $1$. We use {\em SnapPy} to perform a search through the Reidemeister graph to find an unknotting crossing $x$ for each of these knots $K$. Next, we use \Cref{alg:Piccirillo} to create Piccirillo friends $K_x$ and $K'_x$. We use verified computations of hyperbolic volumes and HOMFLYPT polynomials to distinguish $K$ from $K_x$ and $K'_x$. This works for all but the $15$ knots listed in \Cref{tab:twist}. We create diagrams for these $15$ knots, from which we can verify that they are all twist knots. Thus, by \Cref{prop:Wdouble} it follows that $K_x$ and $K'_x$ are isotopic to $K$.
\end{proof}

\Cref{prop:Wdouble} and \ref{prop:dual_census} motivate the following conjecture.

\begin{con}\label{conj:WHD}
Let $K$ be a knot with unknotting number one. Then the following are equivalent.
\begin{enumerate}
    \item $K$ is a twisted Whitehead double.
    \item For every unknotting crossing $x$ of $K$, the Piccirillo friends $K_x$ and $K'_x$ are isotopic to $K$.
    \item There exists an unknotting crossing $x$ of $K$ such that the Piccirillo friends $K_x$ and $K'_x$ are isotopic to $K$.
\end{enumerate}
\end{con}

\section{The concordance friend search}\label{sec:concordance_friends}

In this section, we explain the concordance search and the concordance friend search in detail. Moreover, we discuss how we can use this to search for a pair of friends with different sliceness statuses. 

\subsection{The concordance search}

We start with the following definition.

\begin{defi}(Ribbon band)\label{def:Ribbon_band}
    We call the $2$-stranded tangle $R$ shown on the right of \Cref{fig:ribbon_band} a \textit{ribbon band}. 
    A ribbon band attachment to a knot $K$ is the following local operation. We choose a $3$-ball $B^3$ in $S^3$ that intersects the knot $K$ in a trivial $2$-stranded tangle $T$ as shown on the left of \Cref{fig:ribbon_band}. Then we replace $T$ by $R$ to obtain a new knot $K'$. We say that $K'$ is obtained from $K$ by a \textit{ribbon band attachment}.
\end{defi}

 \begin{figure}[htbp]
     \centering
     \includegraphics[width=.7\textwidth]{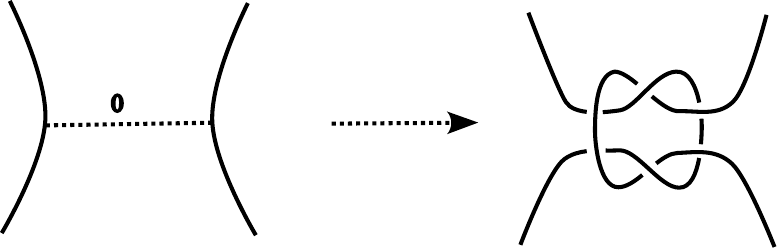}  
     \caption{Attaching a ribbon band to create a concordant knot.}
     \label{fig:ribbon_band}
 \end{figure}

 \begin{cor}\label{cor:ribbon}
     If $K'$ is obtained from $K$ by a ribbon band attachment, then $K$ and $K'$ are concordant. In particular, they have the same $4$-genus.
 \end{cor}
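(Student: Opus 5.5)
The plan is to build an explicit concordance from $K$ to $K'$ as a smoothly embedded annulus $A \subset S^3 \times [0,1]$. The annulus will have one saddle (index-$1$) critical point and one local minimum (index-$0$) critical point with respect to the $[0,1]$-coordinate. The ribbon band $R$ in \Cref{fig:ribbon_band} is a trivial tangle with a band attached to it. This band connects one of the original strands to a small unknotted circle, and that circle is linked with (passes through) the band in the ribbon fashion shown in the figure. Reading the movie backwards, the ribbon band attachment therefore splits into two elementary moves.

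\textbf{Step 1: birth of an unknot.} Starting from $K \times \{0\}$, insert a local minimum. This produces the split link $K \sqcup U$, where $U$ is a small unknot placed inside the ball $B^3$ near the tangle $T$. The unknot $U$ bounds a disk disjoint from $K$.

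\textbf{Step 2: a single band move.} Perform one band move (a saddle) joining $U$ to $K$ along the band pictured in $R$. I will check from \Cref{fig:ribbon_band} that the result of this band sum, after an isotopy supported in $B^3$, is exactly $K'$. In other words, the tangle $R$ is isotopic rel boundary to the tangle obtained from $T$ by banding on a small split circle through the band. This identification is the only step needing care, since it requires reading the figure correctly. I expect it to be a short picture isotopy: pull the ribbon self-intersection back so that the circle is seen to be unlinked from everything before the band is attached.

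\textbf{Step 3: identify the surface and conclude.} The trace of Steps 1 and 2 is a connected surface $A$ with boundary $K \sqcup -K'$. Its Euler characteristic is $\chi(A) = 1 - 1 = 0$, from one minimum and one saddle. A connected orientable surface with two boundary circles and $\chi=0$ is an annulus. Orientability holds because the band is orientation-coherent: a band move that merges two components always yields an orientable cobordism when the orientations of $K$ and $U$ are chosen compatibly, and this choice is possible because $U$ is a newly born split component whose orientation is free. Hence $K$ and $K'$ are smoothly concordant. The statement about $4$-genus then follows from the standard fact, also used in the introduction, that gluing the concordance annulus to a minimal-genus surface in $B^4$ bounded by one knot gives a surface of the same genus bounded by the other. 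Applying the same argument in the reverse direction gives equality of the $4$-genera. The same reasoning applies in both the smooth and topological categories.
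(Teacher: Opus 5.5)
Your argument is correct in outline, but it takes a different route from the paper. The paper does not decompose $R$ into elementary moves. It reads off from the figure that each strand of $R$ cobounds an embedded disk $D_i$ with an arc of $\partial B$, and that $D_1$ and $D_2$ meet each other in two ribbon arcs. It then pushes $D_1$ and $D_2$ into $S^3\times[0,1]$ at different depths to remove these intersections, and glues them to the product $K\times[0,1]$ along the arcs in $\partial B$. Gluing a disk along a boundary arc does not change the topology, so the result is an annulus. Your birth-then-saddle movie describes the same ribbon concordance in Morse-theoretic terms. It makes the ribbon property, the Euler characteristic count and orientability explicit, which the paper's ``resolve and attach'' sketch leaves implicit.

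The cost is the step you defer to the figure. Your guessed picture is a band piercing the disk of its own unknot, involving only one strand. That does not match the paper's description, where the ribbon singularities lie between the parallelism disks of the two different strands. The identification you need can be derived from the paper's description, as follows.

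For each ribbon arc, cut the disk in which it is a proper arc (endpoints on the strand) along a thin strip around it. The resulting pieces are pairwise disjoint embedded disks. The two pieces meeting $\partial B$ are disjoint parallelism disks, so the shortened strands form a copy of the trivial tangle $T$. The other pieces are disks bounded by an unlink split from it, and the strips are bands that reassemble $R$.

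This can produce two births and two saddles instead of one each. Cutting a disk along disjoint arcs gives a tree of pieces, so the saddles can be ordered so that each merges a newly born unknot into the knot. Your orientability argument then applies verbatim, and $\chi = 2-2 = 0$ still gives an annulus.
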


 \begin{proof}
    On the right of \Cref{fig:ribbon_band}, we see the ribbon band $R$ in a $3$-ball $B$. In the figure, we also observe that there are two embedded disks $D_1$ and $D_2$, such that the boundary of each disk consists of two arcs, one arc on the boundary of $B$ and the other coinciding with a component of $R$. These two disks intersect only in their interiors in two ribbon singularities.

    In the product $S^3 \times [0,1]$, we can resolve these two ribbon singularities and obtain two embedded disks. We attach these to the trivial product cobordism 
\[
K \times [0,1] \subset S^3 \times [0,1]
\]
and obtain a ribbon concordance between $K$ and $K'$.
 \end{proof}

This corollary yields the concordance search.

\begin{algorithm}
\caption{Concordance search}
\label{prop:algo2}
\begin{description}
    \item[Input] A diagram $D$ of a knot $K$. Integers $n$ and $t$.
    \item[Output] A list of at most $n$ knots $K'$, presented as diagrams, that are concordant to $K$.
\end{description}

    \begin{enumerate}
        \item Create an empty set $C$ that contains knots concordant to $K$.
        \item The diagram $D$ decomposes the plane into regions. For each region $G$, each pair of strands $s_1$ and $s_2$ of $D$ in the boundary of $G$, and each 
        \[k\in\{-t,-t+1,\ldots,t-1,t\}\]
        perform the following steps.
        \begin{enumerate}
            \item Attach a ribbon band with $k$ half-twists in the region $G$ between the strands $s_1$ and $s_2$, to create a new knot diagram $D'$ of a knot $K'$ that is concordant to $K$, by \Cref{cor:ribbon}.
            \item Use {\em SnapPy} to check if $K'$ is isotopic to $K$ or already contained in $C$. If not, add $K'$ to $C$.
            \item If $C$ contains $n$ elements, return $C$.
        \end{enumerate}
        \item If after Step $(3)$, $C$ still contains fewer than $n$ elements, perform random Reidemeiester moves on the diagram $D$ to obtain a new diagram and perform Step $(3)$ again with this diagram.
    \end{enumerate}
\end{algorithm}

\subsection{The concordance friend search}

Combining \Cref{alg:friend_search} with \Cref{prop:algo2} yields the concordance friend search.

\begin{algorithm}
\caption{Concordance friend search}
\label{algo:concordance_friend_search}
\begin{description}
    \item[Input] A diagram $D$ of a knot $K$. Integers $n$ and $t$.
    \item[Output] A list of at most $n$ concordance friends of $K$ (not obviously concordant to $K$).
\end{description}
    \begin{enumerate}
        \item Create two empty sets $C$ $($containing knots concordant to $K$$)$ and $F$ $($containing concordance friends of $K$$)$.
        \item Run \Cref{prop:algo2}, the concordance search, with parameters $D$, $n$, and $t$, to create $n$ different knots that are concordant to $K$ and not contained in $C$. Add these knots to $C$.
        \item For each $K'$ in $C$:
        \begin{enumerate}
            \item Run \Cref{alg:friend_search}, the friend search.
            \item If this returns a friend $K''$ of $K'$, and if $K''$ is different from $K$, and not contained in $C$ or $F$, add $K''$ to $F$.
            \item If $F$ contains $n$ knots, return $F$.
        \end{enumerate}
        \item If after Step $(4)$, $F$ contains fewer than $n$ elements, either start again at Step~$(3)$ or return $F$.
    \end{enumerate}
\end{algorithm}

\begin{ex}\label{ex:conc_freinds}
   While the friend search was not successful for any non-hyperbolic knot, we were able to find concordance friends in all examples we tried. This is expected by Theorem~1.2 in \cite{BKM_traces}. We discuss some of the interesting examples here.
    \begin{enumerate}
        \item We denote by $W_+$ the positive Whitehead double of the right-handed trefoil, as shown on the left of \Cref{fig_Wp}. Since $W_+$ has trivial Alexander polynomial, it follows that $W_+$ is topologically slice~\cite{Freedman}. On the other hand, it is known that $W_+$ is not smoothly slice, see for example~\cite{Hedden_Whitehead}.
        
        \Cref{algo:concordance_friend_search} shows that the $262$-crossing knot $W_+''$ from \Cref{fig_Wp_friend} is a concordance friend of $W_+$. Indeed, the knot $W_+'$ shown on the right of \Cref{fig_Wp} is obtained from $W_+$ by attaching a single ribbon band, and with {\em SnapPy} it is straightforward to verify that $W_+'$ and $W_+''$ share the same $0$-surgery. We do not know if $W_+''$ is smoothly slice.
        
        \item We denote by $W_-$ the negative Whitehead double of the right-handed trefoil, as shown on the left of \Cref{fig_Wn}. Again, it follows from~\cite{Freedman} that $W_-$ is topologically slice since it has trivial Alexander polynomial. Currently, it is unknown if $W_-$ is smoothly slice.
        
        \Cref{algo:concordance_friend_search} reveals that $W_-$ is concordant to the knot $W_-'$ shown on the right of \Cref{fig_Wn}, which is a friend of the $322$-crossing knot $W_-''$ shown in \Cref{fig_Wn_friend}. Thus $W_-''$ is a concordance friend of $W_-$. We do not know if $W_-''$ is smoothly slice.
        
        \item We write $C$ for the $(2,1)$-cable of the figure eight knot, see \Cref{fig_cable}. $C$ is topologically slice, but not smoothly slice~\cite{cable_fig_8}. 

        With \Cref{algo:concordance_friend_search}, we find the concordance friend $C''$ with $114$-crossings shown in \Cref{fig_Wp_friend}. We do not know if $C''$ is smoothly slice or not.
    \end{enumerate}
\end{ex}

\begin{figure}[htbp]
     \centering
     \includegraphics[width=.9\textwidth] {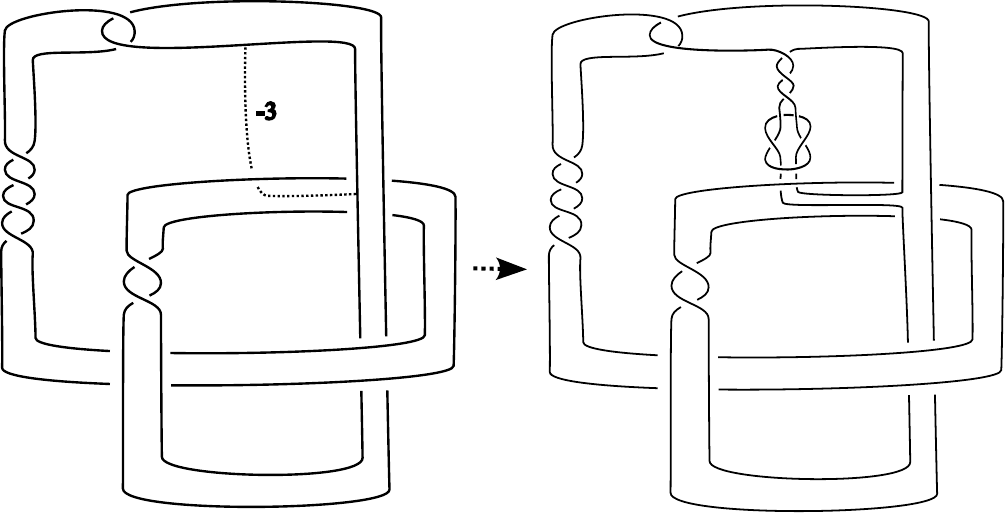}  
     \caption{Left: The positive Whitehead double $W_+$ of the right-handed trefoil. Right: A knot $W_+'$ concordant to $W_+$.}
     \label{fig_Wp}
 \end{figure}

\begin{figure}[htbp]
     \centering
     \includegraphics[width=.92\textwidth] {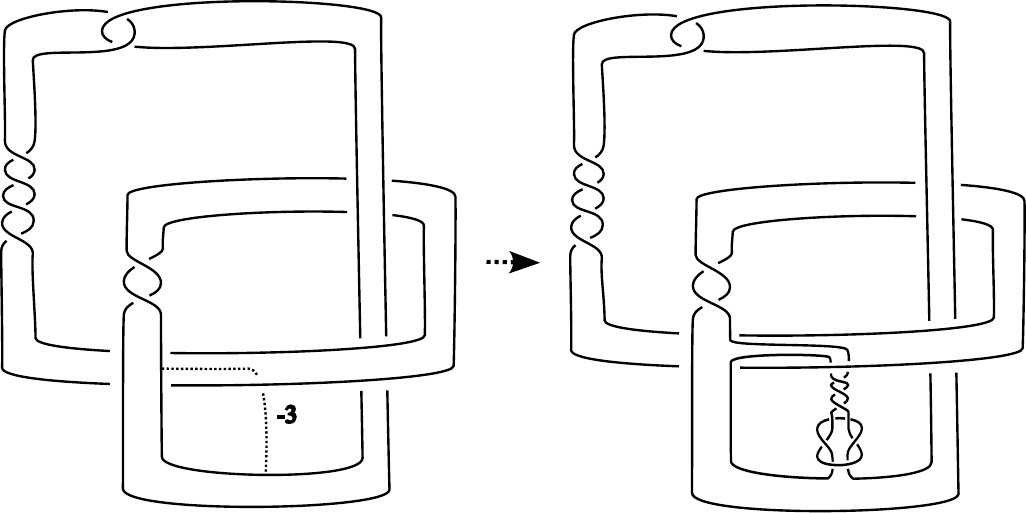}  
     \caption{Left: The negative Whitehead double $W_-$ of the right-handed trefoil. Right: A knot $W_-'$ concordant to $W_-$.}
     \label{fig_Wn}
 \end{figure}

 \begin{figure}[htbp]
     \centering
     \includegraphics[width=.99\textwidth] {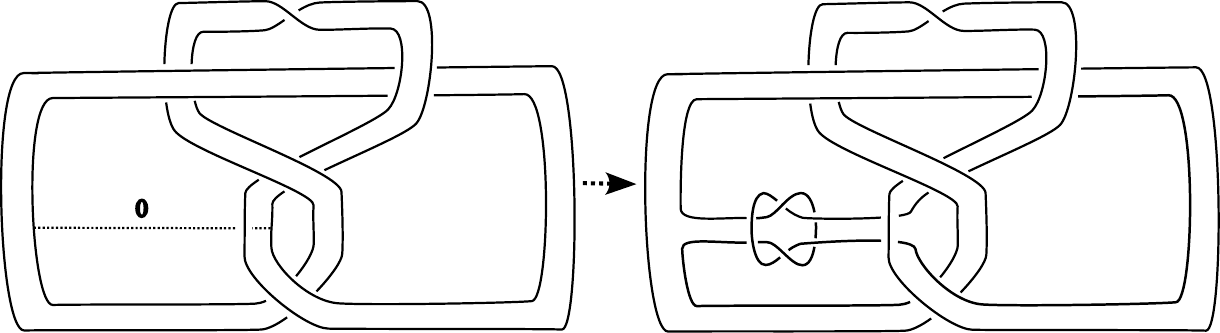}  
     \caption{Left: The $(2,1)$-cable $C$ of the figure eight. Right: A knot $C'$ concordant to $C$.}
     \label{fig_cable}
 \end{figure}

  \begin{figure}[htbp]
     \centering
     \includegraphics[width=.498\textwidth] {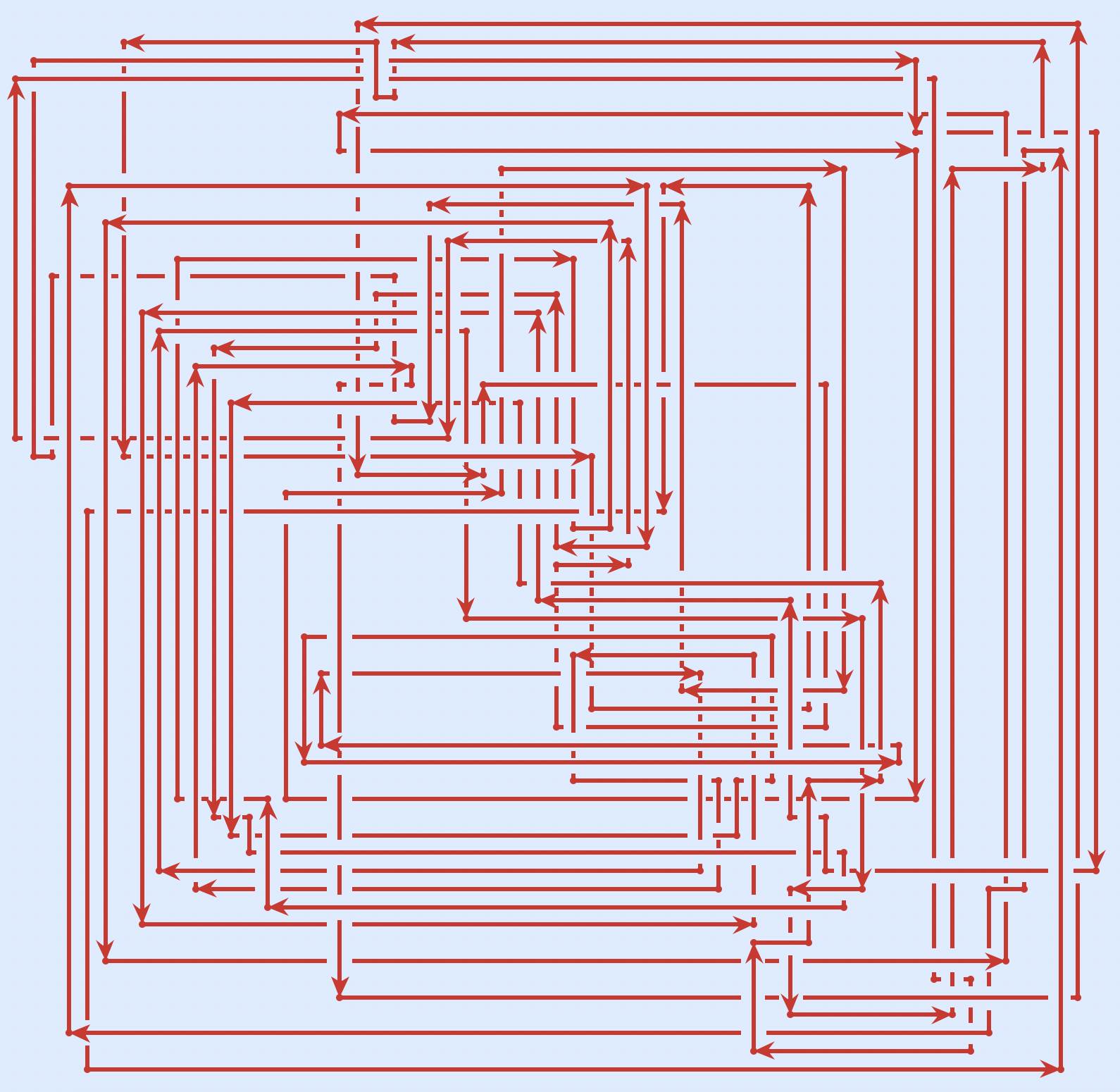}  
     \includegraphics[width=.49\textwidth]{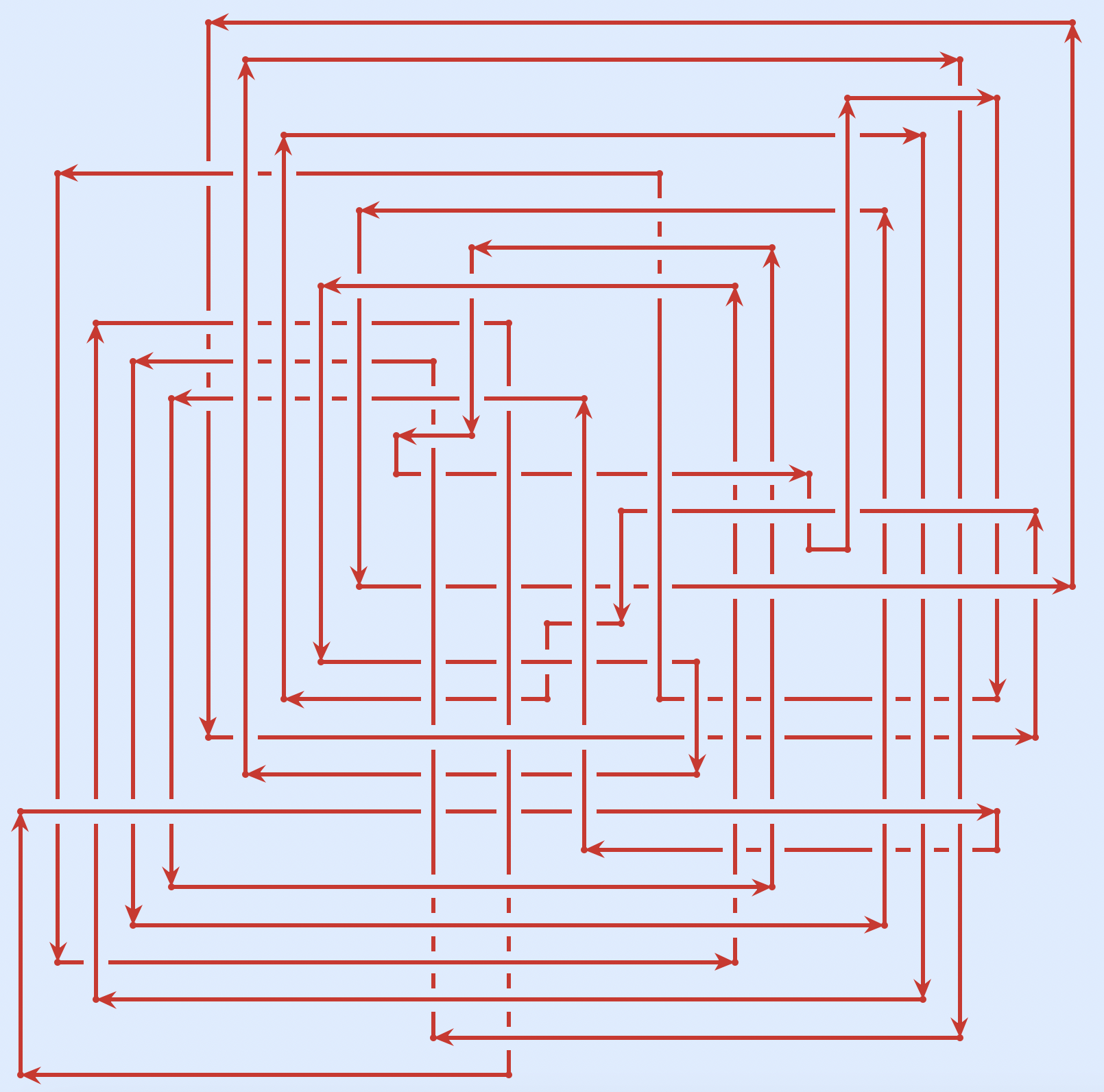}  
     \caption{Concordance friends of the positive Whitehead double of the right-handed trefoil (left) and of the $(2,1)$-cable of the figure eight (right).}
     \label{fig_Wp_friend}
 \end{figure}

  \begin{figure}[htbp]
     \centering
     \includegraphics[width=.75\textwidth]{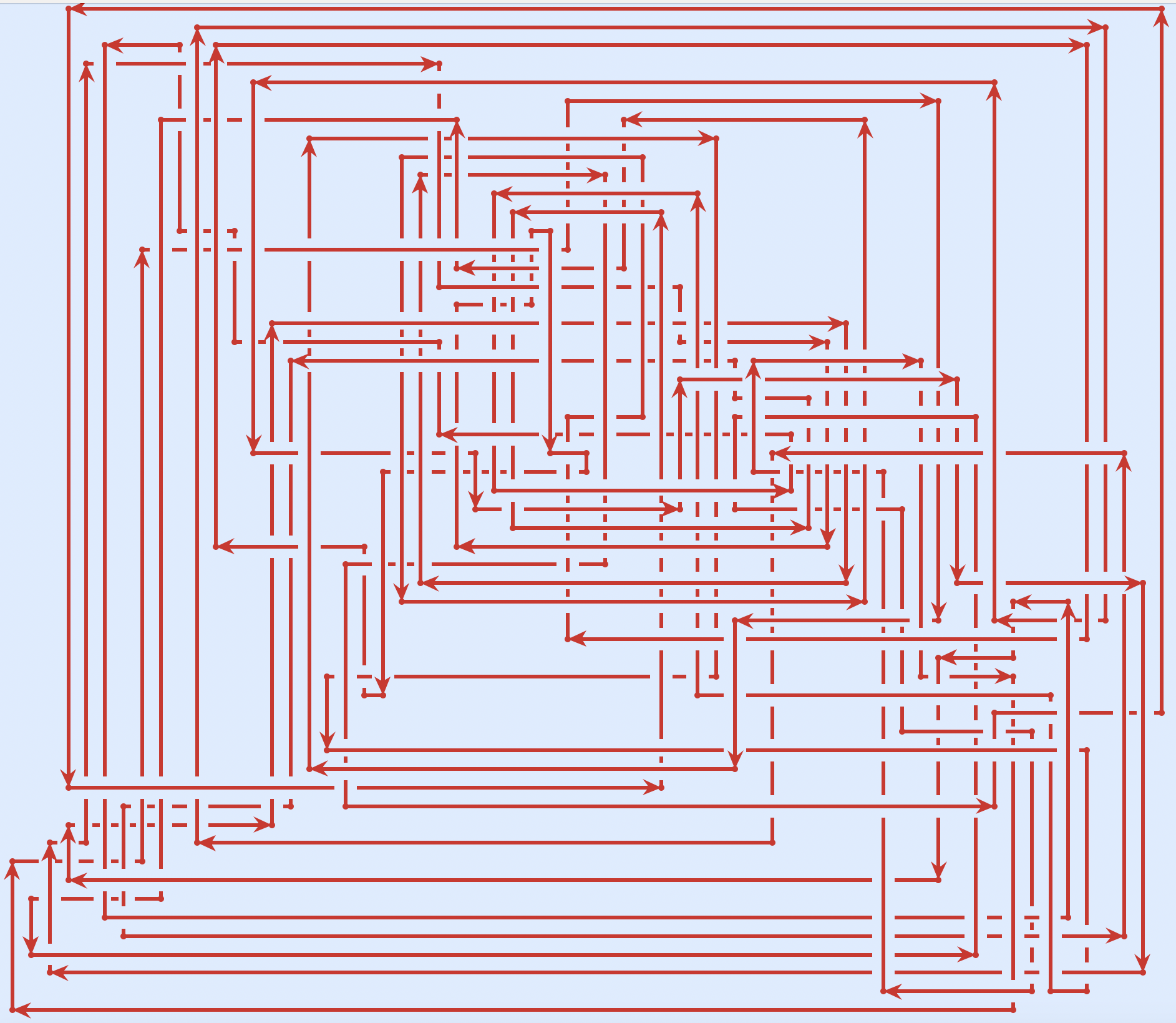}  
     \caption{A concordance friend of the negative Whitehead double of the right-handed trefoil.}
     \label{fig_Wn_friend}
 \end{figure}

\subsection{Search for friends with different sliceness statuses}

By combining the friend search (\Cref{alg:friend_search}) and the concordance search (\Cref{prop:algo2}), we have a potential way to search for concordance friends with different sliceness statuses.

For that, we start with a finite set $F_1$ of slice knots and use \Cref{prop:algo2} to create a new set $F_2$ of knots concordant to a knot in $F_1$. (By construction, the knots in $F_2$ are all slice.) Then we use \Cref{alg:friend_search} to create a set $F_3$ of knots that are friends with a knot in $F_2$. Then we iterate this process to create an arbitrarily long chain of knots that are concordance friends with a slice knot. If any computable slice obstruction for a knot in this chain is non-vanishing, then there exists a pair of concordance friends with different sliceness status. Since the topological Poincaré conjecture in dimension $4$ is true~\cite{Freedman}, two concordance friends have necessarily the same topological sliceness status. Thus, the above sliceness obstructions need to be smooth obstructions. Example candidates for such smooth sliceness obstructions are the s-invariant and its generalizations, or potentially also the sliceness obstructions coming from knot Floer homology. In principle, these obstructions are computable, but quickly become infeasible with increasing crossing number. Since the knots created with \Cref{alg:friend_search} and \ref{prop:algo2} usually have a high crossing number, this approach is not very likely to terminate.

On the other hand, it is possible to find a ribbon certificate even for knots of large crossing number. In \cite{ribbons_with_ML}, modern machine learning methods are used for detecting a certificate for a knot bounding a ribbon disk. While no decidable algorithm for ribbonness is known, the machine learning approach produces a \textit{verifiable certificate} for ribbonness if successful, and the authors demonstrate that their programs can reliably detect ribbon certificates even for knots of high crossing number. A different algorithm is described in~\cite{Dunfield_Gong}.

\begin{algorithm}
\caption{Ribbon certificate search \cite{ribbons_with_ML,Dunfield_Gong}}
\label{alg:ml_ribbon_search}
\begin{description}
    \item[Input] A diagram of a knot $K$.
    \item[Output] Upon success, a verifiable certificate that $K$ is ribbon and hence slice. Otherwise, return \texttt{fail}.
\end{description}
\begin{enumerate}
    \item Using the input diagram, search for a sequence of diagrammatic band additions and Reidemeister moves producing a ribbon presentation for $K$.
    \item If successful, return the ribbon presentation of $K$.
\end{enumerate}
\end{algorithm}

Instead of searching for a knot that is not slice, we propose the following search.

\begin{algorithm}
\caption{Search for a pair of knots that are concordance friends but have different sliceness statuses.}
\label{algo:conc_friends}
\begin{description}
    \item[Input] A finite set $F_1$ of diagrams of knots that are topologically slice but not smoothly slice.
    \item[Output] Upon success, a slice knot which is a concordant friend with a knot in $F_1$. 
\end{description}
   \begin{enumerate}
        \item Use \Cref{prop:algo2} to create a finite set $F_2$ of knots that are concordant to a knot in $F_1$.
        \item Use \Cref{prop:algo2} to create a finite set $F_3$ of knots that are friends with a knot in $F_2$.
        \item For each knot $K$ in $F_2$, create a diagram using \cite{Dunfield_exterior_to_link}, and check if \Cref{alg:ml_ribbon_search} identifies $K$ as ribbon. If so, we have found a pair of concordance friends with different sliceness statuses, and we return them.
        \item Iterate this process, i.e.\ use $F_3$ as input and go to Step (1).
    \end{enumerate}
\end{algorithm}

Examples of knots that are topologically slice but not smoothly slice are the $(2,1)$-cable of the figure eight knot~\cite{cable_fig_8}, the positive Whitehead double of the right-handed trefoil~\cite{Hedden_Whitehead}, cf. \Cref{ex:conc_freinds}, and the Conway knot and its friends~\cite{Piccirillo_Conway_knot}, cf. \Cref{ex:Conway}. 

Steps $(1)$ and $(2)$ of \Cref{algo:conc_friends}, and the ribbon certificate search (\Cref{alg:ml_ribbon_search}) are reasonable fast in practice. The bottleneck turns out to be the creation of the knot diagrams in Step $(3)$ of \Cref{algo:conc_friends}. The friends constructed in the friend search (\Cref{alg:friend_search}) come just as a triangulation of the knot complement together with an $S^3$-filling. To run the ribbon certificate search and the concordance search, we need diagrams. Since the friends constructed with the friend search often have diagrams with several thousand crossings, creating the diagrams takes a long time and often does not terminate at all. In practice, it was most successful to start with the Conway knot and its $4$ friends from \Cref{ex:Conway} since these are hyperbolic with hyperbolic $0$-surgery (and our code works best for hyperbolic knots), and have reasonably small complexity. We discuss this in more detail in the next section. Other simple examples of hyperbolic, topological slice knots that are not smoothly slice are given in~\cite{Dunfield_Gong}. These are the knots $K16n68278$, $17nh0010647$, and $18nh00098198$.

\subsection{The Conway knot and its friends}

We run \Cref{algo:conc_friends} for the Conway knot to search for a pair of friends with different sliceness statuses. In detail, we perform the following steps.

\begin{enumerate}
    \item Set $F_1$ to be the set consisting of the Conway knot $K11n34$ and the three knots $K16n68278$, $17nh0010647$, and $18nh00098198$ from~\cite{Dunfield_Gong}. All four knots are topologically slice, but admit a friend that is not smoothly slice \cite{Piccirillo_Conway_knot,Dunfield_Gong}. We search for a concordance friend of a knot in $F_1$.
    \item For each knot in $F_1$, use the friend search (\Cref{alg:friend_search}) to search for friends. In total we find $13$ friends, including the four friends $K_1,K_2,K_3,K_4$ from \Cref{ex:Conway}. Define $F_2$ to be the set of the four knots from $F_1$ and their $13$ friends.
    \item For each knot $K$ in $F_2$, use \Cref{algo:conc_friends} to create knot diagrams presenting a knot concordant to $K$. In total, we create $5196$ such diagrams. We use {\em SnapPy} to check that these $5196$ knot diagrams present $1950$ non-isotopic knots, which we combine into a set $F_3$. On a standard laptop, this step takes less than $1$ minute and could easily be expanded.
    \item For every knot $K$ in $F_3$, run the friend search (\Cref{alg:friend_search}). This identifies $201$ exteriors of friends. Since this step consists of $1950$ steps that are completely independent of each other, we can easily parallelize this step. We use multiprocessing to run the friend search on $32$ parallel examples at the same time. This step takes around one hour on a standard laptop. 
    \item Use~\cite{Dunfield_exterior_to_link} to create diagrams of the $201$ friends. This step takes the most time. For $17$ of the friends, no diagram was found. Combine the remaining $184$ diagrams into a set $F_4$. Each element in $F_4$ represents a concordance friend of a knot in $F_1$. The smallest diagram in $F_4$ has $79$ and the largest $1620$ crossings. The average crossing number of the diagrams is $305$, and the median is $268$.
    \item For each of the $184$ diagrams of the concordance friends of the Conway knot, we use \Cref{alg:ml_ribbon_search} to (unsuccessfully) search for a ribbon certificate. For every knot, we attach $30,000$ bands. This step takes $10-60$ minutes per diagram. 
    \item Iterate this process: For each knot $K$ in $F_4$, we create more diagrams of knots concordant to $K$. In total, we create $1725$ such diagrams, yielding $1414$ different knots that are not already contained in $F_1$, $F_2$, $F_3$, or $F_4$. Let $F_5$ be the set containing these $1414$ knots.
    \item For the knots in $F_5$, we run the friend search. This identifies $129$ friends. For $78$ of these, we were able to identify a diagram, but no ribbon certificate. Combine these knots into the set $F_6$. The smallest example in $F_6$ has $91$ and the largest $1497$ crossings. The average crossing number of the diagrams is $344$, and the median is $223$.
\end{enumerate}

\begin{figure}[htbp]
     \centering
     \includegraphics[width=.479\textwidth] {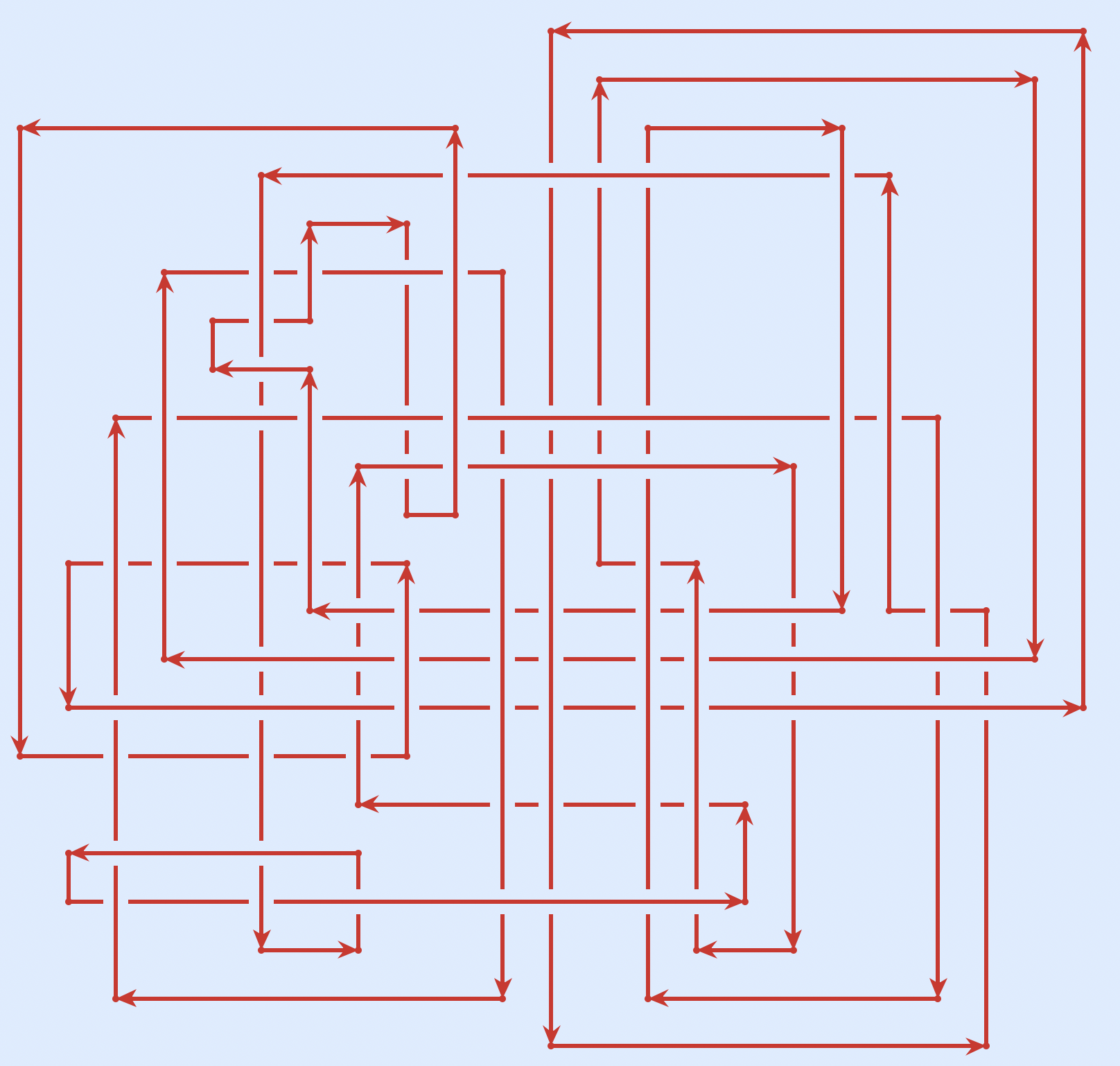}  
     \includegraphics[width=.511\textwidth]{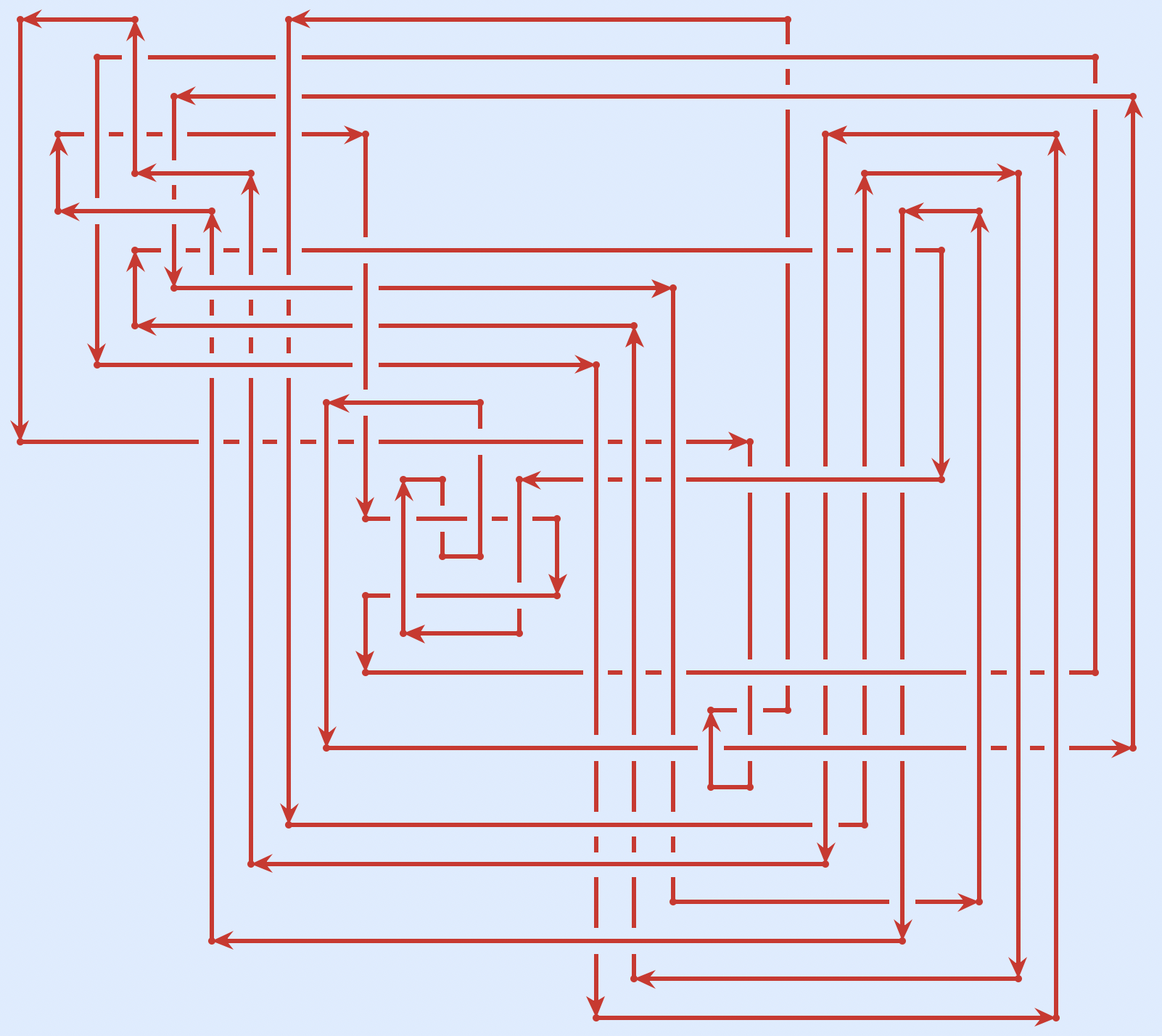}  
     \caption{The two concordance friends with minimal crossing number from $F_4$ (left) and $F_6$ (right).}
     \label{fig:conc_friends}
 \end{figure}

In summary, we created $3643$ diagrams that belong to concordance friends of a topological but not smoothly slice knot. The two examples from $F_4$ and $F_6$ with minimal crossing numbers are displayed in \Cref{fig:conc_friends}. The full list can be accessed at~\cite{data}. For none of these examples we were able to identify a ribbon certificate. If it turns out that one of these examples is slice, then there exists an exotic $4$-sphere. 

{\fontsize{6}{9}\selectfont
\begin{longtable}{@{}lllllllll@{}}
\caption{Low crossing number knots that admit a friend.} \label{tab:low_crossing} \\

\hline
\endfirsthead

\multicolumn{9}{c}%
{{\tablename\ \thetable{} -- continued from previous page}} \\
\hline
\endhead

\hline \multicolumn{9}{r}{{Continued on next page}} \\ 
\endfoot

\hline \hline
\endlastfoot

  $K6a1$ &  
  $K6a2$ &  
  $K7a1$ &  
  $K7a2$ &  
  $K8a1$ &  
  $K8a3$ &
  $K8a4$ &  
  $K8a5$ &  
  $K8a6$ \\ 
  $K8a7$ &  
  $K8a9$ &  
  $K8a10$ &
  $K8a12$ &  
  $K8a14$ &  
  $K8a15$ &  
  $K8a16$ &  
  $K8a17$ &  
  $K8n1$ \\  
  $K8n2$ & 
  $K9a1$ &  
  $K9a2$ &  
  $K9a3$ &  
  $K9a4$ &  
  $K9a5$ &  
  $K9a6$ &  
  $K9a7$ &  
  $K9a8$ \\  
  $K9a10$ &  
  $K9a11$ &  
  $K9a12$ &  
  $K9a13$ &  
  $K9a14$ &  
  $K9a15$ &  
  $K9a17$ &  
  $K9a18$ &  
  $K9a21$ \\  
  $K9a22$ &  
  $K9a28$ &  
  $K9a29$ &  
  $K9a31$ &  
  $K9a32$ &  
  $K9n1$ &  
  $K9n2$ &  
  $K9n4$ &  
  $K9n6$ \\   
  $K9n7$ &  
  $K10a1$ &  
  $K10a2$ &  
  $K10a3$ &  
  $K10a5$ &  
  $K10a6$ &  
  $K10a7$ &  
  $K10a10$ &  
  $K10a11$\\   
  $K10a12$ &  
  $K10a15$ &  
  $K10a16$ &  
  $K10a18$ &  
  $K10a19$ &  
  $K10a20$ &  
  $K10a21$ &  
  $K10a23$ &  
  $K10a24$ \\   
  $K10a25$ &  
  $K10a27$ &  
  $K10a29$ &  
  $K10a30$ &  
  $K10a31$ &  
  $K10a32$ &  
  $K10a34$ &  
  $K10a35$ &  
  $K10a36$ \\   
  $K10a37$ &  
  $K10a38$ &  
  $K10a39$ &  
  $K10a42$ &  
  $K10a43$ &  
  $K10a44$ &  
  $K10a47$ &  
  $K10a48$ &  
  $K10a49$ \\   
  $K10a50$ &  
  $K10a52$ &  
  $K10a53$ &  
  $K10a54$ &  
  $K10a55$ &  
  $K10a57$ &  
  $K10a58$ &  
  $K10a62$ &  
  $K10a63$ \\   
  $K10a64$ &  
  $K10a65$ &  
  $K10a66$ &  
  $K10a67$ &  
  $K10a68$ &  
  $K10a69$ &  
  $K10a71$ &  
  $K10a72$ &  
  $K10a74$ \\   
  $K10a76$ &  
  $K10a77$ &  
  $K10a78$ &  
  $K10a79$ &  
  $K10a80$ &  
  $K10a82$ &  
  $K10a83$ &  
  $K10a84$ &  
  $K10a85$ \\   
  $K10a86$ &  
  $K10a87$ &  
  $K10a88$ &  
  $K10a89$ &  
  $K10a91$ &  
  $K10a92$ &  
  $K10a93$ &  
  $K10a94$ &  
  $K10a95$ \\   
  $K10a97$ &  
  $K10a99$ &  
  $K10a100$ &  
  $K10a103$ &  
  $K10a104$ &  
  $K10a105$ &  
  $K10a106$ &  
  $K10a107$ &  
  $K10a109$ \\   
  $K10a110$ &  
  $K10a111$ &  
  $K10a112$ &  
  $K10a118$ &  
  $K10a119$ &  
  $K10a120$ &  
  $K10a121$ &  
  $K10a122$ &  
  $K10n1$ \\   
  $K10n2$ &  
  $K10n3$ &  
  $K10n4$ &  
  $K10n5$ &  
  $K10n8$ &  
  $K10n9$ &  
  $K10n10$ &  
  $K10n11$ &  
  $K10n12$ \\   
  $K10n13$ &  
  $K10n15$ &  
  $K10n17$ &  
  $K10n18$ &  
  $K10n19$ &  
  $K10n20$ &  
  $K10n23$ &  
  $K10n24$ &  
  $K10n25$ \\   
  $K10n26$ &  
  $K10n28$ &  
  $K10n32$ &  
  $K10n34$ &  
  $K10n35$ &  
  $K10n37$ &  
  $K10n38$ &  
  $K10n39$ &  
  $K10n40$ \\   
  $K10n41$ &  
  $K11a3$ &  
  $K11a4$ &  
  $K11a5$ &  
  $K11a6$ &  
  $K11a7$ &  
  $K11a8$ &  
  $K11a10$ &  
  $K11a11$ \\   
  $K11a12$ &  
  $K11a13$ &  
  $K11a14$ &  
  $K11a16$ &  
  $K11a17$ &  
  $K11a18$ &  
  $K11a19$ &  
  $K11a21$ &  
  $K11a22$ \\   
  $K11a23$ &  
  $K11a24$ &  
  $K11a25$ &  
  $K11a27$ &  
  $K11a28$ &  
  $K11a29$ &  
  $K11a30$ &  
  $K11a33$ &  
  $K11a34$ \\   
  $K11a35$ &  
  $K11a36$ &  
  $K11a37$ &  
  $K11a38$ &  
  $K11a39$ &  
  $K11a41$ &  
  $K11a44$ &  
  $K11a45$ &  
  $K11a46$ \\   
  $K11a47$ &  
  $K11a49$ &  
  $K11a50$ &  
  $K11a52$ &  
  $K11a54$ &  
  $K11a55$ &  
  $K11a56$ &  
  $K11a57$ &  
  $K11a58$ \\   
  $K11a61$ &  
  $K11a65$ &  
  $K11a66$ &  
  $K11a67$ &  
  $K11a69$ &  
  $K11a70$ &  
  $K11a71$ &  
  $K11a72$ &  
  $K11a73$ \\   
  $K11a75$ &  
  $K11a76$ &  
  $K11a77$ &  
  $K11a78$ &  
  $K11a79$ &  
  $K11a80$ &  
  $K11a82$ &  
  $K11a84$ &  
  $K11a85$ \\   
  $K11a86$ &  
  $K11a87$ &  
  $K11a88$ &  
  $K11a89$ &  
  $K11a90$ &  
  $K11a91$ &  
  $K11a92$ &  
  $K11a93$ &  
  $K11a96$ \\   
  $K11a98$ &  
  $K11a101$ &  
  $K11a102$ &  
  $K11a103$ &  
  $K11a104$ &  
  $K11a106$ &  
  $K11a107$ &  
  $K11a108$ &  
  $K11a109$ \\   
  $K11a110$ &  
  $K11a111$ &  
  $K11a112$ &  
  $K11a114$ &  
  $K11a115$ &  
  $K11a116$ &  
  $K11a118$ &  
  $K11a119$ &  
  $K11a121$ \\   
  $K11a122$ &  
  $K11a125$ &  
  $K11a126$ &  
  $K11a128$ &  
  $K11a130$ &  
  $K11a131$ &  
  $K11a132$ &  
  $K11a133$ &  
  $K11a134$ \\   
  $K11a136$ &  
  $K11a137$ &  
  $K11a138$ &  
  $K11a139$ &  
  $K11a141$ &  
  $K11a145$ &  
  $K11a146$ &  
  $K11a147$ &  
  $K11a148$ \\   
  $K11a149$ &  
  $K11a151$ &  
  $K11a152$ &  
  $K11a153$ &  
  $K11a154$ &  
  $K11a156$ &  
  $K11a157$ &  
  $K11a158$ &  
  $K11a159$ \\   
  $K11a160$ &  
  $K11a161$ &  
  $K11a162$ &  
  $K11a163$ &  
  $K11a164$ &  
  $K11a165$ &  
  $K11a166$ &  
  $K11a167$ &  
  $K11a168$ \\   
  $K11a169$ &  
  $K11a172$ &  
  $K11a174$ &  
  $K11a175$ &  
  $K11a176$ &  
  $K11a178$ &  
  $K11a180$ &  
  $K11a181$ &  
  $K11a183$ \\   
  $K11a184$ &  
  $K11a185$ &  
  $K11a187$ &  
  $K11a189$ &  
  $K11a190$ &  
  $K11a193$ &  
  $K11a195$ &  
  $K11a197$ &  
  $K11a198$ \\   
  $K11a199$ &  
  $K11a201$ &  
  $K11a202$ &  
  $K11a209$ &  
  $K11a210$ &  
  $K11a214$ &  
  $K11a216$ &  
  $K11a217$ &  
  $K11a218$ \\   
  $K11a219$ &  
  $K11a221$ &  
  $K11a222$ &  
  $K11a226$ &  
  $K11a228$ &  
  $K11a229$ &  
  $K11a230$ &  
  $K11a231$ &  
  $K11a232$ \\   
  $K11a233$ &  
  $K11a248$ &  
  $K11a249$ &  
  $K11a252$ &  
  $K11a253$ &  
  $K11a254$ &  
  $K11a255$ &  
  $K11a256$ &  
  $K11a257$ \\   
  $K11a258$ &  
  $K11a260$ &  
  $K11a261$ &  
  $K11a262$ &  
  $K11a264$ &  
  $K11a265$ &  
  $K11a266$ &  
  $K11a267$ &  
  $K11a268$ \\   
  $K11a269$ &  
  $K11a270$ &  
  $K11a272$ &  
  $K11a273$ &  
  $K11a275$ &  
  $K11a277$ &  
  $K11a278$ &  
  $K11a279$ &  
  $K11a280$ \\   
  $K11a281$ &  
  $K11a282$ &  
  $K11a283$ &  
  $K11a284$ &  
  $K11a285$ &  
  $K11a287$ &  
  $K11a289$ &  
  $K11a290$ &  
  $K11a294$ \\   
  $K11a296$ &  
  $K11a297$ &  
  $K11a300$ &  
  $K11a303$ &  
  $K11a304$ &  
  $K11a305$ &  
  $K11a307$ &  
  $K11a311$ &  
  $K11a312$ \\   
  $K11a313$ &  
  $K11a315$ &  
  $K11a316$ &  
  $K11a317$ &  
  $K11a322$ &  
  $K11a324$ &  
  $K11a325$ &  
  $K11a326$ &  
  $K11a328$ \\   
  $K11a330$ &  
  $K11a333$ &  
  $K11a345$ &  
  $K11a347$ &  
  $K11a349$ &  
  $K11a350$ &  
  $K11a351$ &  
  $K11n1$ &  
  $K11n3$ \\  
  $K11n4$ &  
  $K11n5$ &  
  $K11n6$ &  
  $K11n7$ &  
  $K11n8$ &  
  $K11n11$ &  
  $K11n12$ &  
  $K11n15$ &  
  $K11n17$ \\   
  $K11n18$ &  
  $K11n20$ &  
  $K11n21$ &  
  $K11n22$ &  
  $K11n23$ &  
  $K11n24$ &  
  $K11n25$ &  
  $K11n26$ &  
  $K11n28$ \\   
  $K11n29$ &  
  $K11n30$ &  
  $K11n31$ &  
  $K11n32$ &  
  $K11n33$ &  
  $K11n34$ &  
  $K11n35$ &  
  $K11n37$ &  
  $K11n38$ \\   
  $K11n39$ &  
  $K11n40$ &  
  $K11n41$ &  
  $K11n42$ &  
  $K11n43$ &  
  $K11n44$ &  
  $K11n45$ &  
  $K11n46$ &  
  $K11n47$ \\   
  $K11n48$ &  
  $K11n49$ &  
  $K11n50$ &  
  $K11n51$ &  
  $K11n52$ &  
  $K11n53$ &  
  $K11n54$ &  
  $K11n55$ &  
  $K11n56$ \\   
  $K11n58$ &  
  $K11n60$ &  
  $K11n61$ &  
  $K11n62$ &  
  $K11n63$ &  
  $K11n64$ &  
  $K11n65$ &  
  $K11n66$ &  
  $K11n67$ \\   
  $K11n68$ &  
  $K11n71$ &  
  $K11n73$ &  
  $K11n74$ &  
  $K11n76$ &  
  $K11n78$ &  
  $K11n79$ &  
  $K11n80$ &  
  $K11n82$ \\   
  $K11n83$ &  
  $K11n84$ &  
  $K11n85$ &  
  $K11n86$ &  
  $K11n87$ &  
  $K11n90$ &  
  $K11n91$ &  
  $K11n92$ &  
  $K11n94$ \\   
  $K11n96$ &  
  $K11n97$ &  
  $K11n98$ &  
  $K11n99$ &  
  $K11n100$ &  
  $K11n101$ &  
  $K11n102$ &  
  $K11n106$ &  
  $K11n110$ \\   
  $K11n111$ &  
  $K11n112$ &  
  $K11n113$ &  
  $K11n114$ &  
  $K11n115$ &  
  $K11n116$ &  
  $K11n117$ &  
  $K11n119$ &  
  $K11n120$ \\   
  $K11n122$ &  
  $K11n123$ &  
  $K11n124$ &  
  $K11n125$ &  
  $K11n127$ &  
  $K11n128$ &  
  $K11n129$ &  
  $K11n130$ &  
  $K11n131$ \\   
  $K11n132$ &  
  $K11n134$ &  
  $K11n135$ &  
  $K11n138$ &  
  $K11n140$ &  
  $K11n142$ &  
  $K11n143$ &  
  $K11n145$ &  
  $K11n146$ \\   
  $K11n150$ &  
  $K11n151$ &  
  $K11n152$ &  
  $K11n153$ &  
  $K11n154$ &  
  $K11n155$ &  
  $K11n156$ &  
  $K11n157$ &  
  $K11n159$ \\   
  $K11n160$ &  
  $K11n161$ &  
  $K11n162$ &  
  $K11n165$ &  
  $K11n166$ &  
  $K11n168$ &  
  $K11n170$ &  
  $K11n172$ &  
  $K11n176$ \\   
  $K11n177$ &  
  $K11n178$ &  
  $K11n179$ &  
  $K11n182$ &  
  $K11n184$ &  
  $K12a1$ &  
  $K12a2$ &  
  $K12a3$ &  
  $K12a5$ \\   
  $K12a6$ &  
  $K12a7$ &  
  $K12a8$ &  
  $K12a9$ &  
  $K12a10$ &  
  $K12a12$ &  
  $K12a13$ &  
  $K12a14$ &  
  $K12a16$ \\  
  $K12a19$ &  
  $K12a20$ &  
  $K12a22$ &  
  $K12a23$ &  
  $K12a25$ &  
  $K12a27$ &  
  $K12a28$ &  
  $K12a31$ &  
  $K12a32$ \\   
  $K12a33$ &  
  $K12a38$ &  
  $K12a39$ &  
  $K12a40$ &  
  $K12a42$ &  
  $K12a46$ &  
  $K12a51$ &  
  $K12a54$ &  
  $K12a57$ \\  
  $K12a58$ &  
  $K12a61$ &  
  $K12a62$ &  
  $K12a63$ &  
  $K12a68$ &  
  $K12a69$ &  
  $K12a71$ &  
  $K12a73$ &  
  $K12a74$ \\  
  $K12a76$ &  
  $K12a77$ &  
  $K12a78$ &  
  $K12a79$ &  
  $K12a80$ &  
  $K12a81$ &  
  $K12a84$ &  
  $K12a86$ &  
  $K12a87$ \\   
  $K12a89$ &  
  $K12a90$ &  
  $K12a91$ &  
  $K12a92$ &  
  $K12a95$ &  
  $K12a98$ &  
  $K12a99$ &  
  $K12a100$ &  
  $K12a103$ \\  
  $K12a104$ &  
  $K12a109$ &  
  $K12a115$ &  
  $K12a118$ &  
  $K12a120$ &  
  $K12a121$ &  
  $K12a122$ &  
  $K12a123$ &  
  $K12a124$ \\   
  $K12a125$ &  
  $K12a126$ &  
  $K12a127$ &  
  $K12a128$ &  
  $K12a129$ &  
  $K12a130$ &  
  $K12a131$ &  
  $K12a132$ &  
  $K12a133$ \\   
  $K12a134$ &  
  $K12a135$ &  
  $K12a136$ &  
  $K12a137$ &  
  $K12a138$ &  
  $K12a139$ &  
  $K12a141$ &  
  $K12a149$ &  
  $K12a150$ \\   
  $K12a151$ &  
  $K12a160$ &  
  $K12a161$ &  
  $K12a163$ &  
  $K12a166$ &  
  $K12a167$ &  
  $K12a168$ &  
  $K12a169$ &  
  $K12a170$ \\   
  $K12a171$ &  
  $K12a172$ &  
  $K12a173$ &  
  $K12a174$ &  
  $K12a177$ &  
  $K12a179$ &  
  $K12a180$ &  
  $K12a181$ &  
  $K12a182$ \\   
  $K12a183$ &  
  $K12a184$ &  
  $K12a185$ &  
  $K12a188$ &  
  $K12a189$ &  
  $K12a190$ &  
  $K12a191$ &  
  $K12a192$ &  
  $K12a196$ \\   
  $K12a197$ &  
  $K12a198$ &  
  $K12a200$ &  
  $K12a202$ &  
  $K12a207$ &  
  $K12a208$ &  
  $K12a209$ &  
  $K12a211$ &  
  $K12a213$ \\   
  $K12a214$ &  
  $K12a217$ &  
  $K12a218$ &  
  $K12a220$ &  
  $K12a221$ &  
  $K12a222$ &  
  $K12a223$ &  
  $K12a224$ &  
  $K12a229$ \\   
  $K12a233$ &  
  $K12a234$ &  
  $K12a235$ &  
  $K12a237$ &  
  $K12a239$ &  
  $K12a240$ &  
  $K12a241$ &  
  $K12a242$ &  
  $K12a243$ \\   
  $K12a244$ &  
  $K12a245$ &  
  $K12a248$ &  
  $K12a249$ &  
  $K12a251$ &  
  $K12a255$ &  
  $K12a256$ &  
  $K12a257$ &  
  $K12a258$ \\   
  $K12a261$ &  
  $K12a262$ &  
  $K12a263$ &  
  $K12a267$ &  
  $K12a270$ &  
  $K12a271$ &  
  $K12a272$ &  
  $K12a273$ &  
  $K12a275$ \\   
  $K12a279$ &  
  $K12a280$ &  
  $K12a282$ &  
  $K12a283$ &  
  $K12a284$ &  
  $K12a286$ &  
  $K12a287$ &  
  $K12a288$ &  
  $K12a290$ \\   
  $K12a291$ &  
  $K12a294$ &  
  $K12a296$ &  
  $K12a297$ &  
  $K12a300$ &  
  $K12a301$ &  
  $K12a302$ &  
  $K12a303$ &  
  $K12a306$ \\  
  $K12a307$ &  
  $K12a308$ &  
  $K12a309$ &  
  $K12a310$ &  
  $K12a312$ &  
  $K12a313$ &  
  $K12a316$ &  
  $K12a318$ &  
  $K12a326$ \\   
  $K12a328$ &  
  $K12a329$ &  
  $K12a330$ &  
  $K12a332$ &  
  $K12a333$ &  
  $K12a334$ &  
  $K12a336$ &  
  $K12a338$ &  
  $K12a339$ \\ 
  $K12a340$ &  
  $K12a341$ &  
  $K12a343$ &  
  $K12a346$ &  
  $K12a347$ &  
  $K12a348$ &  
  $K12a350$ &  
  $K12a351$ &  
  $K12a353$ \\   
  $K12a354$ &  
  $K12a357$ &  
  $K12a358$ &  
  $K12a359$ &  
  $K12a360$ &  
  $K12a365$ &  
  $K12a366$ &  
  $K12a369$ &  
  $K12a370$ \\   
  $K12a371$ &  
  $K12a372$ &  
  $K12a374$ &  
  $K12a376$ &  
  $K12a377$ &  
  $K12a379$ &  
  $K12a383$ &  
  $K12a384$ &  
  $K12a385$ \\   
  $K12a388$ &  
  $K12a390$ &  
  $K12a394$ &  
  $K12a395$ &  
  $K12a399$ &  
  $K12a400$ &  
  $K12a401$ &  
  $K12a402$ &  
  $K12a403$ \\   
  $K12a404$ &  
  $K12a405$ &  
  $K12a406$ &  
  $K12a407$ &  
  $K12a414$ &  
  $K12a415$ &  
  $K12a417$ &  
  $K12a418$ &  
  $K12a419$ \\   
  $K12a423$ &  
  $K12a424$ &  
  $K12a425$ &  
  $K12a426$ &  
  $K12a427$ &  
  $K12a429$ &  
  $K12a430$ &  
  $K12a435$ &  
  $K12a436$ \\   
  $K12a437$ &  
  $K12a438$ &  
  $K12a439$ &  
  $K12a441$ &  
  $K12a446$ &  
  $K12a447$ &  
  $K12a448$ &  
  $K12a450$ &  
  $K12a451$ \\   
  $K12a452$ &  
  $K12a453$ &  
  $K12a454$ &  
  $K12a455$ &  
  $K12a456$ &  
  $K12a457$ &  
  $K12a458$ &  
  $K12a459$ &  
  $K12a462$ \\   
  $K12a463$ &  
  $K12a464$ &  
  $K12a467$ &  
  $K12a468$ &  
  $K12a469$ &  
  $K12a470$ &  
  $K12a471$ &  
  $K12a473$ &  
  $K12a474$ \\   
  $K12a475$ &  
  $K12a476$ &  
  $K12a477$ &  
  $K12a478$ &  
  $K12a479$ &  
  $K12a480$ &  
  $K12a482$ &  
  $K12a483$ &  
  $K12a484$ \\   
  $K12a485$ &  
  $K12a486$ &  
  $K12a487$ &  
  $K12a488$ &  
  $K12a489$ &  
  $K12a491$ &  
  $K12a492$ &  
  $K12a494$ &  
  $K12a496$ \\   
  $K12a497$ &  
  $K12a498$ &  
  $K12a500$ &  
  $K12a505$ &  
  $K12a506$ &  
  $K12a509$ &  
  $K12a510$ &  
  $K12a512$ &  
  $K12a513$ \\  
  $K12a514$ &  
  $K12a515$ &  
  $K12a516$ &  
  $K12a518$ &  
  $K12a524$ &  
  $K12a525$ &  
  $K12a526$ &  
  $K12a528$ &  
  $K12a529$ \\   
  $K12a530$ &  
  $K12a531$ &  
  $K12a533$ &  
  $K12a535$ &  
  $K12a537$ &  
  $K12a542$ &  
  $K12a543$ &  
  $K12a544$ &  
  $K12a545$ \\   
  $K12a546$ &  
  $K12a549$ &  
  $K12a551$ &  
  $K12a552$ &  
  $K12a555$ &  
  $K12a556$ &  
  $K12a557$ &  
  $K12a559$ &  
  $K12a560$ \\   
  $K12a562$ &  
  $K12a564$ &  
  $K12a566$ &  
  $K12a567$ &  
  $K12a568$ &  
  $K12a571$ &  
  $K12a572$ &  
  $K12a573$ &  
  $K12a579$ \\   
  $K12a581$ &  
  $K12a582$ &  
  $K12a583$ &  
  $K12a585$ &  
  $K12a588$ &  
  $K12a589$ &  
  $K12a592$ &  
  $K12a593$ &  
  $K12a594$ \\   
  $K12a596$ &  
  $K12a598$ &  
  $K12a599$ &  
  $K12a601$ &  
  $K12a603$ &  
  $K12a604$ &  
  $K12a606$ &  
  $K12a607$ &  
  $K12a608$ \\   
  $K12a609$ &  
  $K12a613$ &  
  $K12a614$ &  
  $K12a617$ &  
  $K12a618$ &  
  $K12a619$ &  
  $K12a621$ &  
  $K12a622$ &  
  $K12a624$ \\   
  $K12a625$ &  
  $K12a626$ &  
  $K12a627$ &  
  $K12a628$ &  
  $K12a629$ &  
  $K12a630$ &  
  $K12a631$ &  
  $K12a632$ &  
  $K12a633$ \\   
  $K12a634$ &  
  $K12a635$ &  
  $K12a637$ &  
  $K12a638$ &  
  $K12a639$ &  
  $K12a640$ &  
  $K12a641$ &  
  $K12a642$ &  
  $K12a644$ \\   
  $K12a645$ &  
  $K12a646$ &  
  $K12a650$ &  
  $K12a652$ &  
  $K12a655$ &  
  $K12a656$ &  
  $K12a657$ &  
  $K12a658$ &  
  $K12a662$ \\   
  $K12a663$ &  
  $K12a665$ &  
  $K12a666$ &  
  $K12a668$ &  
  $K12a670$ &  
  $K12a671$ &  
  $K12a672$ &  
  $K12a673$ &  
  $K12a674$ \\   
  $K12a676$ &  
  $K12a677$ &  
  $K12a678$ &  
  $K12a680$ &  
  $K12a682$ &  
  $K12a687$ &  
  $K12a688$ &  
  $K12a689$ &  
  $K12a690$ \\   
  $K12a691$ &  
  $K12a692$ &  
  $K12a695$ &  
  $K12a696$ &  
  $K12a697$ &  
  $K12a698$ &  
  $K12a699$ &  
  $K12a700$ &  
  $K12a702$ \\   
  $K12a704$ &  
  $K12a705$ &  
  $K12a707$ &  
  $K12a708$ &  
  $K12a709$ &  
  $K12a710$ &  
  $K12a715$ &  
  $K12a719$ &  
  $K12a721$ \\   
  $K12a724$ &  
  $K12a729$ &  
  $K12a730$ &  
  $K12a732$ &  
  $K12a734$ &  
  $K12a735$ &  
  $K12a736$ &  
  $K12a738$ &  
  $K12a741$ \\   
  $K12a744$ &  
  $K12a745$ &  
  $K12a746$ &  
  $K12a747$ &  
  $K12a748$ &  
  $K12a749$ &  
  $K12a751$ &  
  $K12a752$ &  
  $K12a753$ \\   
  $K12a754$ &  
  $K12a756$ &  
  $K12a758$ &  
  $K12a760$ &  
  $K12a761$ &  
  $K12a765$ &  
  $K12a766$ &  
  $K12a767$ &  
  $K12a770$ \\   
  $K12a771$ &  
  $K12a772$ &  
  $K12a773$ &  
  $K12a774$ &  
  $K12a775$ &  
  $K12a776$ &  
  $K12a778$ &  
  $K12a779$ &  
  $K12a781$ \\   
  $K12a782$ &  
  $K12a783$ &  
  $K12a784$ &  
  $K12a786$ &  
  $K12a789$ &  
  $K12a792$ &  
  $K12a793$ &  
  $K12a797$ &  
  $K12a799$ \\   
  $K12a802$ &  
  $K12a806$ &  
  $K12a807$ &  
  $K12a808$ &  
  $K12a815$ &  
  $K12a819$ &  
  $K12a820$ &  
  $K12a821$ &  
  $K12a824$ \\   
  $K12a825$ &  
  $K12a826$ &  
  $K12a831$ &  
  $K12a833$ &  
  $K12a834$ &  
  $K12a835$ &  
  $K12a836$ &  
  $K12a837$ &  
  $K12a839$ \\   
  $K12a841$ &  
  $K12a844$ &  
  $K12a849$ &  
  $K12a851$ &  
  $K12a853$ &  
  $K12a854$ &  
  $K12a856$ &  
  $K12a857$ &  
  $K12a858$ \\   
  $K12a859$ &  
  $K12a860$ &  
  $K12a862$ &  
  $K12a863$ &  
  $K12a864$ &  
  $K12a865$ &  
  $K12a866$ &  
  $K12a867$ &  
  $K12a869$ \\   
  $K12a870$ &  
  $K12a871$ &  
  $K12a872$ &  
  $K12a875$ &  
  $K12a878$ &  
  $K12a879$ &  
  $K12a883$ &  
  $K12a885$ &  
  $K12a887$ \\   
  $K12a888$ &  
  $K12a890$ &  
  $K12a892$ &  
  $K12a893$ &  
  $K12a894$ &  
  $K12a897$ &  
  $K12a898$ &  
  $K12a901$ &  
  $K12a903$ \\   
  $K12a904$ &  
  $K12a908$ &  
  $K12a910$ &  
  $K12a911$ &  
  $K12a912$ &  
  $K12a913$ &  
  $K12a914$ &  
  $K12a915$ &  
  $K12a916$ \\   
  $K12a919$ &  
  $K12a920$ &  
  $K12a921$ &  
  $K12a924$ &  
  $K12a926$ &  
  $K12a927$ &  
  $K12a928$ &  
  $K12a929$ &  
  $K12a930$ \\   
  $K12a931$ &  
  $K12a933$ &  
  $K12a934$ &  
  $K12a935$ &  
  $K12a936$ &  
  $K12a939$ &  
  $K12a940$ &  
  $K12a941$ &  
  $K12a942$ \\   
  $K12a944$ &  
  $K12a945$ &  
  $K12a947$ &  
  $K12a948$ &  
  $K12a949$ &  
  $K12a950$ &  
  $K12a951$ &  
  $K12a954$ &  
  $K12a959$ \\  
  $K12a960$ &  
  $K12a961$ &  
  $K12a963$ &  
  $K12a964$ &  
  $K12a965$ &  
  $K12a966$ &  
  $K12a968$ &  
  $K12a969$ &  
  $K12a975$ \\   
  $K12a976$ &  
  $K12a979$ &  
  $K12a981$ &  
  $K12a982$ &  
  $K12a984$ &  
  $K12a985$ &  
  $K12a988$ &  
  $K12a989$ &  
  $K12a990$ \\  
  $K12a991$ &  
  $K12a992$ &  
  $K12a993$ &  
  $K12a994$ &  
  $K12a997$ &  
  $K12a998$ &  
  $K12a999$ &   
  $K12a1000$ &  
  $K12a1001$ \\   
  $K12a1002$ &  
  $K12a1003$ &  
  $K12a1005$ &  
  $K12a1006$ &  
  $K12a1008$ &  
  $K12a1009$ &  
  $K12a1010$ &  
  $K12a1011$ &  
  $K12a1013$ \\   
  $K12a1014$ &  
  $K12a1015$ &  
  $K12a1016$ &  
  $K12a1018$ &  
  $K12a1019$ &  
  $K12a1023$ &  
  $K12a1026$ &  
  $K12a1027$ &  
  $K12a1028$ \\   
  $K12a1029$ &  
  $K12a1031$ &  
  $K12a1032$ &  
  $K12a1034$ &  
  $K12a1039$ &  
  $K12a1046$ &  
  $K12a1047$ &  
  $K12a1048$ &  
  $K12a1051$ \\   
  $K12a1052$ &  
  $K12a1053$ &  
  $K12a1055$ &  
  $K12a1056$ &  
  $K12a1057$ &  
  $K12a1058$ &  
  $K12a1059$ &  
  $K12a1060$ &  
  $K12a1061$ \\   
  $K12a1062$ &  
  $K12a1063$ &  
  $K12a1064$ &  
  $K12a1065$ &  
  $K12a1066$ &  
  $K12a1067$ &  
  $K12a1068$ &  
  $K12a1069$ &  
  $K12a1070$ \\   
  $K12a1071$ &  
  $K12a1073$ &  
  $K12a1076$ &  
  $K12a1077$ &  
  $K12a1078$ &  
  $K12a1080$ &  
  $K12a1081$ &  
  $K12a1082$ &  
  $K12a1083$ \\   
  $K12a1086$ &  
  $K12a1087$ &  
  $K12a1090$ &  
  $K12a1093$ &  
  $K12a1094$ &  
  $K12a1096$ &  
  $K12a1098$ &  
  $K12a1099$ &  
  $K12a1100$ \\   
  $K12a1102$ &  
  $K12a1103$ &  
  $K12a1104$ &  
  $K12a1105$ &  
  $K12a1106$ &  
  $K12a1108$ &  
  $K12a1109$ &  
  $K12a1110$ &  
  $K12a1115$ \\   
  $K12a1116$ &  
  $K12a1118$ &  
  $K12a1119$ &  
  $K12a1120$ &  
  $K12a1121$ &  
  $K12a1124$ &  
  $K12a1126$ &  
  $K12a1127$ &  
  $K12a1128$ \\   
  $K12a1129$ &  
  $K12a1130$ &  
  $K12a1132$ &  
  $K12a1135$ &  
  $K12a1137$ &  
  $K12a1139$ &  
  $K12a1140$ &  
  $K12a1143$ &  
  $K12a1144$ \\   
  $K12a1145$ &  
  $K12a1146$ &  
  $K12a1147$ &  
  $K12a1148$ &  
  $K12a1151$ &  
  $K12a1153$ &  
  $K12a1154$ &  
  $K12a1158$ &  
  $K12a1160$ \\   
  $K12a1161$ &  
  $K12a1163$ &  
  $K12a1165$ &  
  $K12a1168$ &  
  $K12a1169$ &  
  $K12a1170$ &  
  $K12a1172$ &  
  $K12a1173$ &  
  $K12a1174$ \\   
  $K12a1175$ &  
  $K12a1176$ &  
  $K12a1177$ &  
  $K12a1178$ &  
  $K12a1180$ &  
  $K12a1181$ &  
  $K12a1182$ &  
  $K12a1185$ &  
  $K12a1186$ \\   
  $K12a1187$ &  
  $K12a1188$ &  
  $K12a1189$ &  
  $K12a1192$ &  
  $K12a1195$ &  
  $K12a1196$ &  
  $K12a1197$ &  
  $K12a1198$ &  
  $K12a1199$ \\   
  $K12a1202$ &  
  $K12a1203$ &  
  $K12a1204$ &  
  $K12a1211$ &  
  $K12a1212$ &  
  $K12a1213$ &  
  $K12a1215$ &  
  $K12a1217$ &  
  $K12a1218$ \\   
  $K12a1219$ &  
  $K12a1223$ &  
  $K12a1224$ &  
  $K12a1225$ &  
  $K12a1227$ &  
  $K12a1228$ &  
  $K12a1230$ &  
  $K12a1231$ &  
  $K12a1234$ \\   
  $K12a1235$ &  
  $K12a1236$ &  
  $K12a1237$ &  
  $K12a1238$ &  
  $K12a1239$ &  
  $K12a1241$ &  
  $K12a1244$ &  
  $K12a1245$ &  
  $K12a1246$ \\   
  $K12a1248$ &  
  $K12a1249$ &  
  $K12a1250$ &  
  $K12a1251$ &  
  $K12a1252$ &  
  $K12a1253$ &  
  $K12a1254$ &  
  $K12a1255$ &  
  $K12a1257$ \\   
  $K12a1258$ &  
  $K12a1259$ &  
  $K12a1260$ &  
  $K12a1261$ &  
  $K12a1262$ &  
  $K12a1263$ &  
  $K12a1265$ &  
  $K12a1266$ &  
  $K12a1267$ \\   
  $K12a1268$ &  
  $K12a1269$ &  
  $K12a1270$ &  
  $K12a1271$ &  
  $K12a1272$ &  
  $K12a1273$ &  
  $K12a1275$ &  
  $K12a1276$ &  
  $K12a1277$ \\   
  $K12a1281$ &  
  $K12a1282$ &  
  $K12a1283$ &  
  $K12a1285$ &  
  $K12n1$ &  
  $K12n2$ &  
  $K12n3$ &  
  $K12n4$ &  
  $K12n5$ \\   
  $K12n7$ &  
  $K12n9$ &  
  $K12n10$ &  
  $K12n11$ &  
  $K12n12$ &  
  $K12n13$ &  
  $K12n14$ &  
  $K12n15$ &  
  $K12n16$ \\   
  $K12n17$ &  
  $K12n18$ &  
  $K12n19$ &  
  $K12n20$ &  
  $K12n21$ &  
  $K12n22$ &  
  $K12n23$ &  
  $K12n24$ &  
  $K12n25$ \\   
  $K12n26$ &  
  $K12n27$ &  
  $K12n28$ &  
  $K12n29$ &  
  $K12n31$ &  
  $K12n32$ &  
  $K12n33$ &  
  $K12n34$ &  
  $K12n35$ \\   
  $K12n36$ &  
  $K12n37$ &  
  $K12n38$ &  
  $K12n39$ &  
  $K12n40$ &  
  $K12n41$ &  
  $K12n42$ &  
  $K12n43$ &  
  $K12n44$ \\   
  $K12n45$ &  
  $K12n46$ &  
  $K12n47$ &  
  $K12n48$ &  
  $K12n49$ &  
  $K12n50$ &  
  $K12n51$ &  
  $K12n53$ &  
  $K12n55$ \\   
  $K12n56$ &  
  $K12n57$ &  
  $K12n60$ &  
  $K12n62$ &  
  $K12n63$ &  
  $K12n65$ &  
  $K12n66$ &  
  $K12n69$ &  
  $K12n70$ \\   
  $K12n71$ &  
  $K12n73$ &  
  $K12n75$ &  
  $K12n76$ &  
  $K12n78$ &  
  $K12n79$ &  
  $K12n80$ &  
  $K12n81$ &  
  $K12n82$ \\   
  $K12n84$ &  
  $K12n85$ &  
  $K12n86$ &  
  $K12n87$ &  
  $K12n89$ &  
  $K12n90$ &  
  $K12n94$ &  
  $K12n95$ &  
  $K12n97$ \\   
  $K12n98$ &  
  $K12n99$ &  
  $K12n101$ &  
  $K12n102$ &  
  $K12n104$ &  
  $K12n106$ &  
  $K12n111$ &  
  $K12n112$ &  
  $K12n113$ \\   
  $K12n114$ &  
  $K12n115$ &  
  $K12n116$ &  
  $K12n118$ &  
  $K12n120$ &  
  $K12n121$ &  
  $K12n122$ &  
  $K12n124$ &  
  $K12n125$ \\   
  $K12n126$ &  
  $K12n127$ &  
  $K12n128$ &  
  $K12n129$ &  
  $K12n130$ &  
  $K12n131$ &  
  $K12n132$ &  
  $K12n134$ &  
  $K12n140$ \\   
  $K12n141$ &  
  $K12n142$ &  
  $K12n144$ &  
  $K12n145$ &  
  $K12n146$ &  
  $K12n151$ &  
  $K12n152$ &  
  $K12n154$ &  
  $K12n157$ \\   
  $K12n158$ &  
  $K12n159$ &  
  $K12n160$ &  
  $K12n161$ &  
  $K12n162$ &  
  $K12n164$ &  
  $K12n165$ &  
  $K12n170$ &  
  $K12n171$ \\   
  $K12n173$ &  
  $K12n174$ &  
  $K12n176$ &  
  $K12n179$ &  
  $K12n180$ &  
  $K12n181$ &  
  $K12n182$ &  
  $K12n184$ &  
  $K12n186$ \\   
  $K12n188$ &  
  $K12n189$ &  
  $K12n190$ &  
  $K12n192$ &  
  $K12n193$ &  
  $K12n194$ &  
  $K12n195$ &  
  $K12n197$ &  
  $K12n198$ \\   
  $K12n199$ &  
  $K12n200$ &  
  $K12n202$ &  
  $K12n206$ &  
  $K12n210$ &  
  $K12n211$ &  
  $K12n212$ &  
  $K12n213$ &  
  $K12n214$ \\  
  $K12n215$ &  
  $K12n216$ &  
  $K12n218$ &  
  $K12n221$ &  
  $K12n223$ &  
  $K12n224$ &  
  $K12n225$ &  
  $K12n230$ &  
  $K12n231$ \\   
  $K12n232$ &  
  $K12n236$ &  
  $K12n237$ &  
  $K12n238$ &  
  $K12n239$ &  
  $K12n241$ &  
  $K12n246$ &  
  $K12n247$ &  
  $K12n248$ \\   
  $K12n249$ &  
  $K12n250$ &  
  $K12n252$ &  
  $K12n253$ &  
  $K12n255$ &  
  $K12n256$ &  
  $K12n257$ &  
  $K12n258$ &  
  $K12n260$ \\   
  $K12n262$ &  
  $K12n263$ &  
  $K12n264$ &  
  $K12n265$ &  
  $K12n266$ &  
  $K12n267$ &  
  $K12n268$ &  
  $K12n270$ &  
  $K12n271$ \\   
  $K12n272$ &  
  $K12n274$ &  
  $K12n275$ &  
  $K12n277$ &  
  $K12n278$ &  
  $K12n279$ &  
  $K12n280$ &  
  $K12n281$ &  
  $K12n282$ \\   
  $K12n283$ &  
  $K12n284$ &  
  $K12n285$ &  
  $K12n286$ &  
  $K12n287$ &  
  $K12n288$ &  
  $K12n295$ &  
  $K12n297$ &  
  $K12n298$ \\   
  $K12n300$ &  
  $K12n301$ &  
  $K12n302$ &  
  $K12n304$ &  
  $K12n306$ &  
  $K12n307$ &  
  $K12n309$ &  
  $K12n310$ &  
  $K12n311$ \\   
  $K12n312$ &  
  $K12n313$ &  
  $K12n314$ &  
  $K12n315$ &  
  $K12n317$ &  
  $K12n318$ &  
  $K12n319$ &  
  $K12n320$ &  
  $K12n322$ \\   
  $K12n323$ &  
  $K12n324$ &  
  $K12n325$ &  
  $K12n330$ &  
  $K12n333$ &  
  $K12n335$ &  
  $K12n336$ &  
  $K12n337$ &  
  $K12n339$ \\   
  $K12n340$ &  
  $K12n342$ &  
  $K12n343$ &  
  $K12n344$ &  
  $K12n345$ &  
  $K12n346$ &  
  $K12n347$ &  
  $K12n348$ &  
  $K12n350$ \\   
  $K12n351$ &  
  $K12n352$ &  
  $K12n353$ &  
  $K12n354$ &  
  $K12n355$ &  
  $K12n356$ &  
  $K12n357$ &  
  $K12n358$ &  
  $K12n359$ \\   
  $K12n360$ &  
  $K12n361$ &  
  $K12n362$ &  
  $K12n363$ &  
  $K12n364$ &  
  $K12n365$ &  
  $K12n367$ &  
  $K12n369$ &  
  $K12n370$ \\   
  $K12n371$ &  
  $K12n372$ &  
  $K12n376$ &  
  $K12n377$ &  
  $K12n378$ &  
  $K12n379$ &  
  $K12n380$ &  
  $K12n381$ &  
  $K12n382$ \\   
  $K12n383$ &  
  $K12n384$ &  
  $K12n385$ &  
  $K12n388$ &  
  $K12n389$ &  
  $K12n390$ &  
  $K12n391$ &  
  $K12n392$ &  
  $K12n393$ \\   
  $K12n394$ &  
  $K12n397$ &  
  $K12n399$ &  
  $K12n400$ &  
  $K12n401$ &  
  $K12n408$ &  
  $K12n409$ &  
  $K12n410$ &  
  $K12n411$ \\   
  $K12n412$ &  
  $K12n414$ &  
  $K12n415$ &  
  $K12n416$ &  
  $K12n420$ &  
  $K12n421$ &  
  $K12n422$ &  
  $K12n423$ &  
  $K12n424$ \\   
  $K12n427$ &  
  $K12n429$ &  
  $K12n430$ &  
  $K12n431$ &  
  $K12n434$ &  
  $K12n435$ &  
  $K12n437$ &  
  $K12n439$ &  
  $K12n440$ \\   
  $K12n442$ &  
  $K12n443$ &  
  $K12n444$ &  
  $K12n446$ &  
  $K12n447$ &  
  $K12n448$ &  
  $K12n449$ &  
  $K12n450$ &  
  $K12n451$ \\   
  $K12n452$ &  
  $K12n454$ &  
  $K12n456$ &  
  $K12n457$ &  
  $K12n458$ &  
  $K12n461$ &  
  $K12n462$ &  
  $K12n463$ &  
  $K12n464$ \\   
  $K12n466$ &  
  $K12n467$ &  
  $K12n468$ &  
  $K12n469$ &  
  $K12n470$ &  
  $K12n471$ &  
  $K12n475$ &  
  $K12n478$ &  
  $K12n479$ \\   
  $K12n481$ &  
  $K12n482$ &  
  $K12n483$ &  
  $K12n484$ &  
  $K12n485$ &  
  $K12n486$ &  
  $K12n487$ &  
  $K12n488$ &  
  $K12n489$ \\   
  $K12n490$ &  
  $K12n491$ &  
  $K12n492$ &  
  $K12n493$ &  
  $K12n497$ &  
  $K12n498$ &  
  $K12n499$ &  
  $K12n500$ &  
  $K12n501$ \\   
  $K12n504$ &  
  $K12n505$ &  
  $K12n506$ &  
  $K12n507$ &  
  $K12n511$ &  
  $K12n512$ &  
  $K12n514$ &  
  $K12n515$ &  
  $K12n517$ \\   
  $K12n519$ &  
  $K12n520$ &  
  $K12n521$ &  
  $K12n522$ &  
  $K12n523$ &  
  $K12n531$ &  
  $K12n533$ &  
  $K12n534$ &  
  $K12n535$ \\   
  $K12n536$ &  
  $K12n539$ &  
  $K12n541$ &  
  $K12n543$ &  
  $K12n544$ &  
  $K12n546$ &  
  $K12n547$ &  
  $K12n548$ &  
  $K12n550$ \\   
  $K12n551$ &  
  $K12n552$ &  
  $K12n557$ &  
  $K12n558$ &  
  $K12n559$ &  
  $K12n560$ &  
  $K12n561$ &  
  $K12n562$ &  
  $K12n563$ \\   
  $K12n564$ &  
  $K12n566$ &  
  $K12n567$ &  
  $K12n568$ &  
  $K12n569$ &  
  $K12n572$ &  
  $K12n573$ &  
  $K12n578$ &  
  $K12n579$ \\   
  $K12n580$ &  
  $K12n584$ &  
  $K12n586$ &  
  $K12n587$ &  
  $K12n588$ &  
  $K12n592$ &  
  $K12n595$ &  
  $K12n596$ &  
  $K12n597$ \\   
  $K12n599$ &  
  $K12n601$ &  
  $K12n605$ &  
  $K12n606$ &  
  $K12n607$ &  
  $K12n608$ &  
  $K12n610$ &  
  $K12n611$ &  
  $K12n612$ \\   
  $K12n613$ &  
  $K12n614$ &  
  $K12n616$ &  
  $K12n617$ &  
  $K12n618$ &  
  $K12n620$ &  
  $K12n621$ &  
  $K12n625$ &  
  $K12n628$ \\   
  $K12n629$ &  
  $K12n630$ &  
  $K12n631$ &  
  $K12n632$ &  
  $K12n633$ &  
  $K12n634$ &  
  $K12n635$ &  
  $K12n636$ &  
  $K12n637$ \\   
  $K12n643$ &  
  $K12n645$ &  
  $K12n646$ &  
  $K12n650$ &  
  $K12n651$ &  
  $K12n656$ &  
  $K12n657$ &  
  $K12n661$ &  
  $K12n662$ \\   
  $K12n663$ &  
  $K12n664$ &  
  $K12n665$ &  
  $K12n667$ &  
  $K12n669$ &  
  $K12n670$ &  
  $K12n672$ &  
  $K12n673$ &  
  $K12n675$ \\   
  $K12n676$ &  
  $K12n677$ &  
  $K12n678$ &  
  $K12n681$ &  
  $K12n683$ &  
  $K12n684$ &  
  $K12n685$ &  
  $K12n686$ &  
  $K12n687$ \\   
  $K12n690$ &  
  $K12n695$ &  
  $K12n699$ &  
  $K12n701$ &  
  $K12n702$ &  
  $K12n704$ &  
  $K12n705$ &  
  $K12n706$ &  
  $K12n708$ \\  
  $K12n709$ &  
  $K12n710$ &  
  $K12n711$ &  
  $K12n712$ &  
  $K12n713$ &  
  $K12n715$ &  
  $K12n716$ &  
  $K12n717$ &  
  $K12n718$ \\   
  $K12n719$ &  
  $K12n726$ &  
  $K12n727$ &  
  $K12n730$ &  
  $K12n731$ &  
  $K12n732$ &  
  $K12n733$ &  
  $K12n735$ &  
  $K12n736$ \\   
  $K12n740$ &  
  $K12n741$ &  
  $K12n742$ &  
  $K12n743$ &  
  $K12n746$ &  
  $K12n748$ &  
  $K12n749$ &  
  $K12n751$ &  
  $K12n752$ \\   
  $K12n753$ &  
  $K12n754$ &  
  $K12n755$ &  
  $K12n757$ &  
  $K12n759$ &  
  $K12n760$ &  
  $K12n763$ &  
  $K12n766$ &  
  $K12n767$ \\   
  $K12n768$ &  
  $K12n769$ &  
  $K12n771$ &  
  $K12n772$ &  
  $K12n773$ &  
  $K12n774$ &  
  $K12n775$ &  
  $K12n777$ &  
  $K12n779$ \\   
  $K12n780$ &  
  $K12n781$ &  
  $K12n782$ &  
  $K12n785$ &  
  $K12n786$ &  
  $K12n787$ &  
  $K12n788$ &  
  $K12n789$ &  
  $K12n790$ \\   
  $K12n792$ &  
  $K12n794$ &  
  $K12n795$ &  
  $K12n797$ &  
  $K12n798$ &  
  $K12n800$ &  
  $K12n801$ &  
  $K12n802$ &  
  $K12n804$ \\   
  $K12n805$ &  
  $K12n808$ &  
  $K12n809$ &  
  $K12n810$ &  
  $K12n811$ &  
  $K12n812$ &  
  $K12n814$ &  
  $K12n815$ &  
  $K12n816$ \\   
  $K12n817$ &  
  $K12n819$ &  
  $K12n821$ &  
  $K12n822$ &  
  $K12n824$ &  
  $K12n825$ &  
  $K12n826$ &  
  $K12n827$ &  
  $K12n828$ \\   
  $K12n829$ &  
  $K12n831$ &  
  $K12n833$ &  
  $K12n835$ &  
  $K12n838$ &  
  $K12n840$ &  
  $K12n841$ &  
  $K12n844$ &  
  $K12n846$ \\   
  $K12n847$ &  
  $K12n852$ &  
  $K12n853$ &  
  $K12n854$ &  
  $K12n855$ &  
  $K12n857$ &  
  $K12n858$ &  
  $K12n860$ &  
  $K12n861$ \\   
  $K12n862$ &  
  $K12n864$ &  
  $K12n865$ &  
  $K12n868$ &  
  $K12n870$ &  
  $K12n871$ &  
  $K12n873$ &  
  $K12n876$ &  
  $K12n878$ \\   
  $K12n879$ &  
  $K12n883$ &  
  $K12n884$ &  
  $K12n886$ &&&&&\\ 
\end{longtable}}

{\fontsize{6}{9}\selectfont
\begin{longtable}{@{}lllllllll@{}}
\caption{Census knots that admit a friend.} \label{tab:census} \\

\endfirsthead

\multicolumn{9}{c}%
{{\tablename\ \thetable{} -- continued from previous page}} \\
\endhead

\hline \multicolumn{9}{r}{{Continued on next page}} \\ 
\endfoot

\hline \hline
\endlastfoot

  $K5\_8$ &
  $K5\_13$ &
  $K5\_19$ &
  $K6\_9$ &
  $K6\_10$ &
  $K6\_37$ &
  $K6\_40$ &
  $K7\_11$ &
  $K7\_18$ \\
  $K7\_24$ &
  $K7\_44$ &
  $K7\_49$ &
  $K7\_92$ &
  $K7\_105$ &
  $K7\_114$ &
  $K7\_117$ &
  $K7\_121$ &
  $K7\_125$ \\
  $K7\_127$ &
  $K7\_128$ &
  $K7\_129$ &
  $K8\_10$ &
  $K8\_20$ &
  $K8\_44$ &
  $K8\_48$ &
  $K8\_51$ &
  $K8\_137$ \\
  $K8\_141$ &
  $K8\_169$ &
  $K8\_170$ &
  $K8\_179$ &
  $K8\_183$ &
  $K8\_194$ &
  $K8\_198$ &
  $K8\_199$ &
  $K8\_205$ \\
  $K8\_206$ &
  $K8\_208$ &
  $K8\_221$ &
  $K8\_224$ &
  $K8\_226$ &
  $K8\_234$ &
  $K8\_236$ &
  $K8\_238$ &
  $K8\_240$ \\
  $K8\_241$ &
  $K8\_245$ &
  $K8\_247$ &
  $K8\_249$ &
  $K8\_250$ &
  $K8\_252$ &
  $K8\_257$ &
  $K8\_259$ &
  $K8\_263$ \\
  $K8\_265$ &
  $K8\_270$ &
  $K8\_272$ &
  $K8\_277$ &
  $K8\_280$ &
  $K8\_281$ &
  $K8\_286$ &
  $K8\_287$ &
  $K8\_288$ \\
  $K8\_291$ &
  $K8\_292$ &
  $K8\_293$ &
  $K8\_294$ &
  $K8\_295$ &
  $K8\_297$ &
  $K8\_301$ &
  $K9\_9$ &
  $K9\_21$ \\
  $K9\_59$ &
  $K9\_74$ &
  $K9\_149$ &
  $K9\_256$ &
  $K9\_259$ &
  $K9\_261$ &
  $K9\_274$ &
  $K9\_281$ &
  $K9\_314$ \\
  $K9\_326$ &
  $K9\_361$ &
  $K9\_383$ &
  $K9\_386$ &
  $K9\_387$ &
  $K9\_395$ &
  $K9\_404$ &
  $K9\_415$ &
  $K9\_426$ \\
  $K9\_433$ &
  $K9\_434$ &
  $K9\_437$ &
  $K9\_439$ &
  $K9\_444$ &
  $K9\_447$ &
  $K9\_456$ &
  $K9\_465$ &
  $K9\_468$ \\
  $K9\_488$ &
  $K9\_500$ &
  $K9\_505$ &
  $K9\_509$ &
  $K9\_510$ &
  $K9\_514$ &
  $K9\_515$ &
  $K9\_517$ &
  $K9\_532$ \\
  $K9\_533$ &
  $K9\_534$ &
  $K9\_535$ &
  $K9\_536$ &
  $K9\_537$ &
  $K9\_563$ &
  $K9\_564$ &
  $K9\_574$ &
  $K9\_580$ \\
  $K9\_585$ &
  $K9\_586$ &
  $K9\_591$ &
  $K9\_592$ &
  $K9\_596$ &
  $K9\_597$ &
  $K9\_608$ &
  $K9\_610$ &
  $K9\_612$ \\
  $K9\_622$ &
  $K9\_626$ &
  $K9\_630$ &
  $K9\_633$ &
  $K9\_634$ &
  $K9\_639$ &
  $K9\_643$ &
  $K9\_645$ &
  $K9\_648$ \\
  $K9\_650$ &
  $K9\_651$ &
  $K9\_655$ &
  $K9\_663$ &
  $K9\_664$ &
  $K9\_668$ &
  $K9\_670$ &
  $K9\_672$ &
  $K9\_675$ \\
  $K9\_677$ &
  $K9\_678$ &
  $K9\_681$ &
  $K9\_683$ &
  $K9\_684$ &
  $K9\_686$ &
  $K9\_687$ &
  $K9\_689$ &
  $K9\_692$ \\
  $K9\_694$ &
  $K9\_696$ &
  $K9\_700$ &
  $K9\_701$ &
  $K9\_702$ &
  $K9\_703$ &
  $K9\_704$ &
  $K9\_707$ &
  $K9\_709$ \\
  $K9\_712$ &
  $K9\_717$ &
  $K9\_719$ &
  $K9\_720$ &
  $K9\_722$ &
  $K9\_724$ &
  $K9\_725$ &
  $K9\_727$ &
  $K9\_736$ \\
  $K9\_738$ &
  $K9\_740$ &
  $K9\_741$ &
  $K9\_743$ &
  $K9\_744$ &
  $K9\_746$ &
  $K9\_754$ &
  $K9\_757$ &
  $K9\_758$ \\
  $K9\_759$ &
  $K9\_760$ &
  $K9\_762$ &&&&&&\\
\end{longtable}}

\newpage
 \let\MRhref\undefined
 \bibliographystyle{hamsalpha}
 \bibliography{friends}

\end{document}

%% file: RGB.pdf_tex
\begingroup%
  \makeatletter%
  \providecommand\color[2][]{%
    \errmessage{(Inkscape) Color is used for the text in Inkscape, but the package 'color.sty' is not loaded}%
    \renewcommand\color[2][]{}%
  }%
  \providecommand\transparent[1]{%
    \errmessage{(Inkscape) Transparency is used (non-zero) for the text in Inkscape, but the package 'transparent.sty' is not loaded}%
    \renewcommand\transparent[1]{}%
  }%
  \providecommand\rotatebox[2]{#2}%
  \newcommand*\fsize{\dimexpr\f@size pt\relax}%
  \newcommand*\lineheight[1]{\fontsize{\fsize}{#1\fsize}\selectfont}%
  \ifx\svgwidth\undefined%
    \setlength{\unitlength}{278.5409643bp}%
    \ifx\svgscale\undefined%
      \relax%
    \else%
      \setlength{\unitlength}{\unitlength * \real{\svgscale}}%
    \fi%
  \else%
    \setlength{\unitlength}{\svgwidth}%
  \fi%
  \global\let\svgwidth\undefined%
  \global\let\svgscale\undefined%
  \makeatother%
  \begin{picture}(1,0.22415798)%
    \lineheight{1}%
    \setlength\tabcolsep{0pt}%
    \put(0,0){\includegraphics[width=\unitlength,page=1]{RGB.pdf}}%
    \put(0.43770549,0.10566208){\color[rgb]{0,0,0.01176471}\makebox(0,0)[lt]{\lineheight{1.25}\smash{\begin{tabular}[t]{l}$\cong$\end{tabular}}}}%
    \put(0.7812902,0.10124184){\color[rgb]{0,0,0.01176471}\makebox(0,0)[lt]{\lineheight{1.25}\smash{\begin{tabular}[t]{l}$\cong$\end{tabular}}}}%
    \put(0.0947974,0.10364319){\color[rgb]{0,0,0.01176471}\makebox(0,0)[lt]{\lineheight{1.25}\smash{\begin{tabular}[t]{l}$\cong$\end{tabular}}}}%
    \put(0.00014212,0.10346021){\color[rgb]{0,0,0.01176471}\makebox(0,0)[lt]{\lineheight{1.25}\smash{\begin{tabular}[t]{l}$K(0)$\end{tabular}}}}%
    \put(0.85220339,0.10141082){\color[rgb]{0,0.65098039,0.01176471}\makebox(0,0)[lt]{\lineheight{1.25}\smash{\begin{tabular}[t]{l}$K_G(0)$\end{tabular}}}}%
    \put(0.17326789,0.02556635){\color[rgb]{0,0,0.99607843}\makebox(0,0)[lt]{\lineheight{1.25}\smash{\begin{tabular}[t]{l}$K_B(0)$\end{tabular}}}}%
    \put(0.30924556,0.16706724){\color[rgb]{0,0,0.97647059}\makebox(0,0)[lt]{\lineheight{1.25}\smash{\begin{tabular}[t]{l}$0$\end{tabular}}}}%
    \put(0.55835336,0.14785519){\color[rgb]{0.99215686,0,0.01176471}\makebox(0,0)[lt]{\lineheight{1.25}\smash{\begin{tabular}[t]{l}$0$\end{tabular}}}}%
    \put(0.6687077,0.18833262){\color[rgb]{0,0.6745098,0.01176471}\makebox(0,0)[lt]{\lineheight{1.25}\smash{\begin{tabular}[t]{l}$0$\end{tabular}}}}%
    \put(0.50319253,0.03211563){\color[rgb]{0,0,1}\makebox(0,0)[lt]{\lineheight{1.25}\smash{\begin{tabular}[t]{l}$\mp2$\end{tabular}}}}%
  \end{picture}%
\endgroup%

%% file: WHD.pdf_tex
\begingroup%
  \makeatletter%
  \providecommand\color[2][]{%
    \errmessage{(Inkscape) Color is used for the text in Inkscape, but the package 'color.sty' is not loaded}%
    \renewcommand\color[2][]{}%
  }%
  \providecommand\transparent[1]{%
    \errmessage{(Inkscape) Transparency is used (non-zero) for the text in Inkscape, but the package 'transparent.sty' is not loaded}%
    \renewcommand\transparent[1]{}%
  }%
  \providecommand\rotatebox[2]{#2}%
  \newcommand*\fsize{\dimexpr\f@size pt\relax}%
  \newcommand*\lineheight[1]{\fontsize{\fsize}{#1\fsize}\selectfont}%
  \ifx\svgwidth\undefined%
    \setlength{\unitlength}{406.52721186bp}%
    \ifx\svgscale\undefined%
      \relax%
    \else%
      \setlength{\unitlength}{\unitlength * \real{\svgscale}}%
    \fi%
  \else%
    \setlength{\unitlength}{\svgwidth}%
  \fi%
  \global\let\svgwidth\undefined%
  \global\let\svgscale\undefined%
  \makeatother%
  \begin{picture}(1,0.46506402)%
    \lineheight{1}%
    \setlength\tabcolsep{0pt}%
    \put(0,0){\includegraphics[width=\unitlength,page=1]{WHD.pdf}}%
    \put(0.33064737,0.35708799){\color[rgb]{0,0,0.01176471}\makebox(0,0)[lt]{\lineheight{1.25}\smash{\begin{tabular}[t]{l}$\cong$\end{tabular}}}}%
    \put(0.04326183,0.38780094){\color[rgb]{0,0,1}\makebox(0,0)[lt]{\lineheight{1.25}\smash{\begin{tabular}[t]{l}$0$\end{tabular}}}}%
    \put(0.03482116,0.11844188){\color[rgb]{0,0.68627451,0.01176471}\makebox(0,0)[lt]{\lineheight{1.25}\smash{\begin{tabular}[t]{l}$0$\end{tabular}}}}%
    \put(0.90026851,0.39814918){\color[rgb]{1,0,0.01176471}\makebox(0,0)[lt]{\lineheight{1.25}\smash{\begin{tabular}[t]{l}$0$\end{tabular}}}}%
    \put(0.4666977,0.44601328){\color[rgb]{0,0.73333333,0.01176471}\makebox(0,0)[lt]{\lineheight{1.25}\smash{\begin{tabular}[t]{l}$0$\end{tabular}}}}%
    \put(0.57120104,0.27858802){\color[rgb]{0,0,1}\makebox(0,0)[lt]{\lineheight{1.25}\smash{\begin{tabular}[t]{l}$\mp2$\end{tabular}}}}%
    \put(0.45125396,0.1723871){\color[rgb]{0,0,1}\makebox(0,0)[lt]{\lineheight{1.25}\smash{\begin{tabular}[t]{l}$\mp2$\end{tabular}}}}%
    \put(0.9200125,0.01643099){\color[rgb]{0,0.70588235,0.01176471}\makebox(0,0)[lt]{\lineheight{1.25}\smash{\begin{tabular}[t]{l}$0$\end{tabular}}}}%
    \put(0.59254156,0.00326828){\color[rgb]{0.98823529,0,0.01176471}\makebox(0,0)[lt]{\lineheight{1.25}\smash{\begin{tabular}[t]{l}$0$\end{tabular}}}}%
    \put(0.33503493,0.08725272){\color[rgb]{0,0,0.01176471}\makebox(0,0)[lt]{\lineheight{1.25}\smash{\begin{tabular}[t]{l}$\cong$\end{tabular}}}}%
    \put(0.7347768,0.2542356){\color[rgb]{0,0,0.01176471}\rotatebox{-90}{\makebox(0,0)[lt]{\lineheight{1.25}\smash{\begin{tabular}[t]{l}$\cong$\end{tabular}}}}}%
    \put(0,0){\includegraphics[width=\unitlength,page=2]{WHD.pdf}}%
  \end{picture}%
\endgroup%